\newcommand\NoBlackBoxes{\global\overfullrule0pt}
\newcommand{\eps}{\varepsilon}
\newcommand{\todistr}{\overset{d}{\underset{N\to\infty}\longrightarrow}}
\newcommand{\N}{\mathbb{N}}
\newcommand{\Z}{\mathbb{Z}}
\renewcommand{\P}{\mathbb{P}}
\newcommand{\Cov}{\mathop{\mathrm{Cov}}\nolimits}
\newcommand{\eee}{{\rm e}}
\newcommand{\dd}{{\rm d}}
\let\serieslogo@\relax
\let\@setcopyright\relax
\theoremstyle{plain}
\newtheorem{theorem}{Theorem}[section]
\newtheorem{lemma}[theorem]{Lemma}
\newtheorem{corollary}[theorem]{Corollary}
\theoremstyle{definition}
\theoremstyle{remark}
\newtheorem{remark}[theorem]{Remark}
\renewcommand{\P}{{\mathbb{P}}}
\newcommand{\E}{{\mathbb{E}}}
\newcommand{\R}{{\mathbb{R}}}
\newcommand{\C}{\mathbb{C}}
\newcommand{\V}{\mathbb{V}}
\newcommand{\Co}{\mathrm{Cov}}
\renewcommand{\epsilon}{\varepsilon}
\renewcommand{\phi}{\varphi}
\numberwithin{equation}{section}
\begin{document}

\setcounter{page}{1}

\title[Fluctuations for Ising models on dense Erd\H{o}s-R\'enyi graphs]{Fluctuations of the Magnetization for Ising models on dense Erd\H{o}s-R\'enyi random graphs}

\author[Zakhar Kabluchko]{Zakhar Kabluchko}
\address[Zakhar Kabluchko]{Fachbereich Mathematik und Informatik,
Universit\"at M\"unster,
Einsteinstra\ss e 62,
48149 M\"unster,
Germany}

\email[Zakhar Kabluchko]{zakhar.kabluchko@uni-muenster.de}

\author[Matthias L\"owe]{Matthias L\"owe}
\address[Matthias L\"owe]{Fachbereich Mathematik und Informatik,
Universit\"at M\"unster,
Einsteinstra\ss e 62,
48149 M\"unster,
Germany}

\email[Matthias L\"owe]{maloewe@math.uni-muenster.de}

\author[Kristina Schubert]{Kristina Schubert}
\address[Kristina Schubert]{ Fakult\"at f\"ur Mathematik, TU Dortmund, Vogelpothsweg 87, 44227 Dortmund,
Germany}

\email[Kristina Schubert]{kristina.schubert@tu-dortmund.de}


\date{\today}

\subjclass[2000]{Primary: 82B44; Secondary: 82B20}

\keywords{Ising model, dilute Curie-Weiss model, fluctuations, Central Limit Theorem, random graphs}

\newcommand{\wlim}{\mathop{\hbox{\rm w-lim}}}
\newcommand{\na}{{\mathbb N}}
\newcommand{\re}{{\mathbb R}}

\newcommand{\vep}{\varepsilon}

\begin{abstract}
We analyze Ising/Curie-Weiss models on the (directed) Erd\H{o}s-R\'enyi random graph on $N$ vertices in which every edge is present with probability $p$.
These models were introduced by Bovier and Gayrard [\emph{J.\ Stat.\ Phys.}, 1993]. We prove a quenched Central Limit Theorem for the magnetization in the high-temperature regime $\beta<1$ when $p=p(N)$ satisfies $p^3N^2\to +\infty$.
\end{abstract}

\maketitle

\section{Introduction and main results}
\subsection{Description of the model}
The topic of this note are Ising models on random graphs, more precisely the Erd\H{o}s-R\'enyi random graph. These models of disordered ferromagnets were introduced in the physics literature (see \cite{froehlichlecture} for a classic survey). First rigorous results go back to Georgii \cite{georgii_dilute}, the model we are analysing in this note was introduced and rigorously studied in \cite{BG93b}.

To define this model, let $G=G(N,p)$ be a realization of a directed Erd\H{o}s-R\'enyi graph with loops, so for all $i,j \in \{1, \ldots, N\}$ (which may coincide) the directed edge $(i,j)$ is present with probability $p\in (0,1]$, independent of all other edges. For the general model, we assume that $p=p(N)$ depends on $N$ in such a way that $pN \to \infty$ as $N\to\infty$ to ensure that there is a giant component comprising almost all of the vertices. However, as it turns out, the model seems to have a change of behavior in $p$ when $p^3N^2 $ is of constant order. Hence in this note we focus on the case $p^3N^2 \to \infty$ and treat the other cases in later works.

We denote by $\vep_{i,j}$ the indicator variable which equals $1$ if the edge $(i,j)$ is present in the graph. That is, $(\vep_{i,j})_{i,j=1}^{N}$ are independent random variables with
$$
\P[\vep_{i,j} = 1] = p, \qquad \P[\vep_{i,j} = 0] = 1- p.
$$
The Hamiltonian of the Ising model on $G$ is a function $H:= H_N: \{-1,+1\}^N \to\R$, which can be written as
\begin{equation}\label{hamilCW}
H(\sigma) = - \frac 1 {2Np} \sum_{i,j=1}^N \vep_{i,j} \sigma_i \sigma_j
\end{equation}
for $\sigma = (\sigma_1,\ldots,\sigma_N) \in \{-1,+1\}^N$. The associated Gibbs measure is a random probability measure on $\{-1,+1\}^N$ given by
\begin{equation}\label{gibbs}
\mu_\beta (\sigma):=\frac 1 {Z_{N}(\beta)} \exp(-\beta H(\sigma)), \qquad \sigma \in \{-1,+1\}^N,
\end{equation}
where $\beta \ge 0$ is called the inverse temperature, while the quantity
\begin{equation}\label{partition}
Z_N(\beta):= \sum_{\sigma \in \{-1,+1\}^N} \exp(-\beta H(\sigma))
\end{equation}
is called the partition function. It encodes much of the interesting information of the system.
The limit
\begin{equation}\label{free_energy}
-\lim_{N \to \infty} \frac {1} {N\beta} \log Z_{N}(\beta),
\end{equation}
if it exists, is called the free energy per site or particle.
The normalization in \eqref{hamilCW} has been chosen in such a way that the critical temperature of the model is $\beta_c=1$. To see this, define the magnetization (per particle) of the (dilute) Curie-Weiss-Ising model to be
$$
m_N(\sigma)= \frac{\sum_{i=1}^N \sigma_i}{N}.
$$
Since often we will simply use $\sum_{i=1}^N \sigma_i$, let us put
\begin{equation}
|\sigma|:= N m_N(\sigma)= \sum_{i=1}^N \sigma_i.
\end{equation}
In the standard Curie-Weiss model, i.e.\ in the case $p=1$, this quantity was studied extensively. A key tool for the investigation are large deviation techniques, see e.g.\ \cite{Ellis_Newman_78a}, \cite{Ellis_Newman_78b}, \cite{EiseleEllis-MultiplePhaseTransitionsInGCW}, or the monograph \cite{Ellis-EntropyLargeDeviationsAndStatisticalMechanics}. The main finding is that the model exhibits a phase transition at $\beta =1$. While in the high temperature regime $\beta \le 1$ the magnetization $m_N$ converges to 0 as the system size $N$ goes to infinity, it is concentrated around two values, $m^+$ and $-m^+$ for some strictly positive $m^+$, if $\beta >1$ (the low temperature regime). Also the free energy per site is a vanishing quantity at high temperatures while it is not vanishing in the low temperature regime.
As was shown in \cite{BG93b}, the same holds true for dilute Curie-Weiss-Ising models, if $p N \to \infty$: For $\beta \le 1$ the magnetization $m_N$ converges to $0$ under the Gibbs measure for almost all realizations of the random graph, while this is not the case for larger $\beta$. In this latter case the distribution of $m_N$ under the Gibbs measure has an almost sure limit with respect to the realization of the random graph given by
$$
\frac 12 (\delta_{m^+(\beta)}+\delta_{-m^+(\beta)}),
$$
where $\delta_x$ is the Dirac-measure in a point $x$ and $m^+(\beta)$ is the largest solution of
$$
z= \tanh(\beta z).$$
Also note that for convenience and consistency with \cite{BG93b} we consider directed Erd\H{o}s-R\'enyi graphs. However, the results for undirected ones should agree with our findings.

The present paper is inspired by two results: The first of them concerns the fluctuations of $m_N$ in the Curie-Weiss model. In \cite{Ellis_Newman_78b}, \cite{Ellis-EntropyLargeDeviationsAndStatisticalMechanics}, \cite{EL10}, \cite{Chatterjee_Shao} it was shown that in the Curie-Weiss model,  $\sqrt N m_N$ converges in distribution to a centered normal random variable with variance $\frac 1{1-\beta}$ when $\beta <1$, while for $\beta =1$ one has to scale differently. Here one obtains that $\sqrt[4] N m_N$ converges in distribution to a non-normal random variable
with Lebesgue density proportional to $\exp(-\frac 1 {12} x^4)$. The second motivation for the present note were results on the thermodynamics of Ising models on random graphs.
While Ising models  have been studied on different random graph models, most of the models share a locally tree-like, i.e.~sparse, random graph structure.
 The thermodynamic quantities in such models were analyzed i.e.~by Dembo and Montanari in \cite{Dembo_Montanari_2010a} and \cite{Dembo_Montanari_2010b} as well as Giardina and van der Hofstad with coauthors in \cite{van_der_Hofstad_et_al_2010},
\cite{van_der_Hofstad_et_al_2014}, \cite{van_der_Hofstad_et_al_2015}, \cite{van_der_Hofstad_et_al_2015b}, and \cite{van_der_Hofstad_et_al_2015c}. However, the first rigorous result on dilute Ising or Curie-Weiss models is probably due to Bovier and Gayrard \cite{BG93b} and the setting in \cite{BG93b} is of a different nature than in the other references. While many ideas in \cite{Dembo_Montanari_2010a}, \cite{Dembo_Montanari_2010b}, \cite{van_der_Hofstad_et_al_2010},
\cite{van_der_Hofstad_et_al_2014}, \cite{van_der_Hofstad_et_al_2015b}, \cite{van_der_Hofstad_et_al_2015c}, and \cite{van_der_Hofstad_et_al_2015} exploit the almost tree-like structure of the underlying graph (i.e.~in a neighborhood of a given vertex one hardly finds any circles),  the authors in \cite{BG93b} compare the model to that of the fully connected graph (which stands no chance of being successful on sparse graphs).
Their main idea in the proof is to show that for $Np \to \infty$ with overwhelming probability for all $\sigma$ the set of aligned pairs of spins (i.e.~$\sigma_i \sigma_j=1$) that are connected by an edge
has a size that is close to its expected size.
This admits estimates that show that a large deviations principle for the magnetization carries over from the Curie-Weiss model to the Ising model on almost all the realizations of the Erd\H{o}s-R\'enyi graph. This precise form of the argument was exploited in \cite{LSV19}, \cite{LSblock1}, while this step was performed differently in \cite{BG93b}.  On the other hand, the estimates in \cite{BG93b} do not allow to also transfer the above stated fluctuation results for the magnetization to Curie-Weiss models on these random graphs, because the difference between the quenched energy and the expected energy is of a too large order (roughly of order at least $\sqrt{N/p}$). To analyze them is the aim of the present note.

\subsection{Main results}
Our first result concerns the distribution of the normalized magnetization $\sqrt N m_N(\sigma) = |\sigma|/\sqrt N$  under the Gibbs measure $\mu_\beta$ defined in~\eqref{gibbs}.
Denote by $\mathcal M(\R)$ the space of probability measures on $\R$ endowed with the topology of weak convergence. It is well-known that weak convergence can be metrized by the L\'evy metric
$$
d_{L}(\mu_1,\mu_2) = \inf\{\eps>0\colon \mu_1(-\infty,t-\eps]-\eps \leq \mu_2(-\infty,t] \leq \mu_1(-\infty,t+\eps]+\eps\},
$$
which turns $\mathcal M(\R)$ into a complete, separable metric space. Consider the following random element of $\mathcal M(\R)$:
\begin{equation}\label{eq:def_L_N}
L_N := \frac 1{Z_N(\beta)} \sum_{\sigma\in \{-1,+1\}^N} \eee^{-\beta H(\sigma)}  \delta_{\frac 1 {\sqrt N} \sum_{i=1}^N \sigma_i},
\end{equation}
where again $\delta_x$ denotes the Dirac measure in $x$.
Note that $L_N$ is a \textit{random} element of $\mathcal \R$ because it depends on the random variables $(\eps_{i,j})_{i,j=1}^N$ generating the random graph. Let also $\mathfrak N_{0,\sigma^2} \in  \mathcal M(\R)$ be the normal probability distribution with mean $0$ and variance $\sigma^2$.

\begin{theorem}\label{theo:magnetization}
Assume that $0<\beta<1$ and let $p=p(N)$ be such that $p^3 N^2 \to\infty$ as $N\to\infty$.
Then, $L_N$, considered as a random element of $\mathcal M(\R)$, converges in probability to $\mathfrak N_{0, 1/(1-\beta)}$.
That is to say, for every $\eps>0$,
$$
\lim_{N\to\infty} \P[d_{L}(L_N, \mathfrak N_{0, 1/(1-\beta)})>\eps] = 0.
$$
\end{theorem}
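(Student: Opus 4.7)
The plan is to establish the convergence via characteristic functions: for each $t\in\R$ I will show
\[\widehat{L_N}(t) := \int e^{itx}\,dL_N(x) \overset{P}{\longrightarrow} e^{-t^2/(2(1-\beta))},\]
and then upgrade pointwise-in-$t$ convergence in probability to convergence in the L\'evy metric by a standard subsequence/L\'evy-continuity argument (along any subsequence, extract a sub-subsequence on which the characteristic functions converge almost surely for all rationals $t$, apply L\'evy's continuity theorem pathwise, and conclude that $d_L \to 0$ in probability). The key first move is to split the Hamiltonian as $-\beta H(\sigma) = \tfrac{\beta|\sigma|^2}{2N} + \beta E(\sigma)$, where the random fluctuation
\[\beta E(\sigma) := \frac{\beta}{2Np}\sum_{i,j=1}^N (\eps_{ij}-p)\,\sigma_i\sigma_j\]
isolates the difference from the Curie-Weiss Hamiltonian. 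Letting $\E_{CW}$ denote expectation under the $p=1$ Gibbs measure $\mu_\beta^{CW}$, this immediately yields the ratio representation
\[\widehat{L_N}(t) = \frac{\E_{CW}\bigl[e^{\beta E(\sigma)+it|\sigma|/\sqrt N}\bigr]}{\E_{CW}\bigl[e^{\beta E(\sigma)}\bigr]}.\]

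The core of the proof is a first- and second-moment analysis of the numerator and denominator over the graph randomness. Exploiting independence of the $\eps_{ij}$, the cumulant generating function of a centred Bernoulli expands as
\[\log \E_\eps\bigl[e^{\beta E(\sigma)}\bigr] = \frac{\beta^2(1-p)}{8p} + \sum_{k\geq 3}\frac{\kappa_k\beta^k}{k!\,(2Np)^k}\sum_{i,j=1}^N(\sigma_i\sigma_j)^k,\]
with $\kappa_k$ the $k$-th cumulant of $\eps_{ij}-p$ (so $\kappa_k = O(p)$). Since $(\sigma_i\sigma_j)^k$ equals $1$ for even $k$ and $\sigma_i\sigma_j$ for odd $k$, the even-$k$ tail is $\sigma$-independent with leading correction of size $O(1/(N^2 p^3))$ coming from $k=4$, whereas the odd-$k$ terms are proportional to $|\sigma|^2/(N^{k-1}p^{k-1})$, giving (even for the worst case $|\sigma|^2=N^2$) a contribution of size $O(1/(Np^2))$ at $k=3$. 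Under $p^3N^2\to\infty$ both error types are $o(1)$ uniformly in $\sigma$, so $\E_\eps[e^{\beta E(\sigma)}] = e^{\beta^2(1-p)/(8p)}(1+o(1))$. The analogous computation for a pair $(\sigma,\sigma')$ gives
\[\frac{\E_\eps[e^{\beta(E(\sigma)+E(\sigma'))}]}{\E_\eps[e^{\beta E(\sigma)}]\,\E_\eps[e^{\beta E(\sigma')}]} = \exp\!\Bigl(\frac{\beta^2(1-p)q^2}{4p}+o(1)\Bigr),\]
with overlap $q := N^{-1}\sum_i\sigma_i\sigma_i'$. The high-temperature Curie-Weiss large deviation bound $\P_{CW}^{\otimes 2}[|q|\geq r]\leq e^{-cNr^2}$ then yields $\E_{CW}^{\otimes 2}[e^{\beta^2(1-p)q^2/(4p)}-1] = O(1/(Np))$; hence, by Chebyshev, the squared coefficient of variation of both numerator and denominator vanishes in the graph randomness.

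Combining these estimates, the deterministic prefactor $e^{\beta^2(1-p)/(8p)}$ cancels between numerator and denominator, and $\widehat{L_N}(t)$ reduces to $\E_{CW}[e^{it|\sigma|/\sqrt N}]+o_P(1)$; by the classical high-temperature Curie-Weiss CLT of Ellis-Newman, the latter tends to $e^{-t^2/(2(1-\beta))}$, and the subsequence argument then delivers $d_L(L_N,\mathfrak{N}_{0,1/(1-\beta)})\overset{P}{\to}0$. I expect the most delicate step to be the second-moment bound: although typical overlaps have $q=O(N^{-1/2})$, the perturbation $\exp(\beta^2 q^2/(4p))$ threatens to blow up on atypical $q$ when $p$ is small, so it must be controlled by the Curie-Weiss large-deviation tail $e^{-cNq^2}$, which succeeds precisely when $Np\to\infty$. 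The stricter assumption $p^3 N^2\to\infty$ in the hypothesis is forced not by the variance bound but by the fourth-order cumulant term in the $\sigma$-independent part of the mean, matching exactly the threshold flagged by the authors in the introduction.
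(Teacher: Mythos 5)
Your overall architecture is sound and is in essence a reparametrization of the paper's proof: the paper runs a first/second moment method over the graph randomness on the generalized partition function $Z_N(\beta,g)=\sum_\sigma \eee^{-\beta H(\sigma)}g(|\sigma|/\sqrt N)$ for bounded continuous $g$, using the expansion of $F(p,z)=\log(1-p+p\eee^z)$; your cumulant expansion of the centred Bernoulli is exactly the expansion of $F(p,z)-pz$, your tilting by $\mu_\beta^{CW}$ is division by $\sum_\sigma \eee^{\beta|\sigma|^2/(2N)}$, and $g(x)=\eee^{itx}$ replaces the real test functions. Your identification of the $k=4$ cumulant term as the source of the threshold $p^3N^2\to\infty$ matches the paper's Remark 3.3, and your overlap-based variance bound plays the role of the paper's constraint $|\sigma\tau|^2\le N^{4/3}$ in the set $S_N$.

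There is, however, one genuine error. You claim that the odd-$k$ correction to $\log\E_\eps[\eee^{\beta E(\sigma)}]$ is $o(1)$ \emph{uniformly in $\sigma$}, asserting that the worst case $|\sigma|^2=N^2$ gives $O(1/(Np^2))$ at $k=3$ and that this vanishes under $p^3N^2\to\infty$. It does not: writing $p=N^{-a}$, the hypothesis $p^3N^2\to\infty$ only requires $a<2/3$, while $1/(Np^2)=N^{2a-1}\to\infty$ for every $a\in(1/2,2/3)$. So for atypical configurations the multiplicative error in $\E_\eps[\eee^{\beta E(\sigma)}]$ is unbounded, and the same non-uniformity infects the $+o(1)$ in your pair formula. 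The conclusion is still reachable, because the offending term has the form $c_N\,|\sigma|^2/N$ with $c_N=O(1/(N^2p^2))\to 0$, and under the high-temperature Curie--Weiss measure $|\sigma|^2/N$ has exponential moments bounded uniformly in $N$ (this is where $\beta<1$ enters); alternatively one can split into typical configurations $|\sigma|^2\le N^2p$ and atypical ones controlled by the local CLT bound $2^{-N}\nu_N(k)\le CN^{-1/2}\eee^{-k^2/(2N)}$, which is precisely what the paper does in Step 1 of the proof of Theorem \ref{EZNg} and Step 2 of the proof of Theorem \ref{VarZ2}. As written, your proof skips this step by invoking a uniformity that is false, so you must add one of these two arguments before the first- and second-moment estimates are legitimate.
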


A key tool for the proof of Theorem \ref{theo:magnetization} is the quantity $Z_N(\beta, g)$ defined below, whose analysis may be interesting in its own right. To define it,
denote by $\mathcal C_b(\R)$ the space of bounded, continuous functions $g:\R\to\R$.
For any $g \in \mathcal{C}_b(\R)$ consider the following generalization of the partition function:
\begin{equation}\label{ZN(g)}
Z_N(\beta, g):= \sum_{\sigma \in \{-1, +1\}^N} \eee^{-\beta H(\sigma)} g\left( \frac{\sum_{i=1}^N \sigma_i}{\sqrt N} \right).
\end{equation}
Note that $Z_N(\beta)= Z_N(\beta, 1)$ is the partition function defined in~\eqref{partition} and
\begin{equation}
\E_{\mu_\beta }\left[ g\left( \frac{\sum_{i=1}^N \sigma_i}{\sqrt N} \right)\right]= \frac{Z_N(\beta, g)}{Z_N(\beta)},
\end{equation}
where, for a fixed disorder $(\eps_{i,j})_{i,j=1}^N$,  $\E_{\mu_\beta}$ denotes the expectation with respect to the Gibbs measure $\mu_\beta$.
We will prove the following

\begin{theorem}\label{theoCW1}
Fix  $\beta\in (0,1)$ and let $p=p(N)$ be such that $p^3N^{2} \to \infty$ as $N\to\infty$.
Then, for all non-negative $g \in \mathcal{C}_b(\R)$, $g\not \equiv 0$,
\begin{equation}
\frac{Z_N(\beta, g)}{\E Z_N(\beta,g)} \to 1
\end{equation}
in $L^2$ and, hence, in probability.
Here, $\E$ denotes expectation with respect to the probability measure $\P$, i.e.~the randomness generated by $(\eps_{i,j})_{i,j=1}^N$.
\end{theorem}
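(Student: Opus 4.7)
We apply the second moment method: it suffices to show $\E[Z_N(\beta,g)^2]/(\E Z_N(\beta,g))^2\to 1$, which yields $L^2$ convergence of $Z_N(\beta,g)/\E Z_N(\beta,g)$ to $1$. Set $a:=\beta/(2Np)$; this tends to $0$ because $Np\to\infty$ (a consequence of $N^2p^3\to\infty$ and $p\le 1$). By independence of the edge indicators, $\E[e^{a\eps_{i,j}\sigma_i\sigma_j}]=1-p+pe^{a\sigma_i\sigma_j}$, and since $\sigma_i\sigma_j\in\{\pm1\}$ this factors as $\exp(A+B\sigma_i\sigma_j)$ with $A=\tfrac12\log(1+2p(1-p)(\cosh a-1))$ and $B=\tfrac12\log\!\bigl((1-p+pe^a)/(1-p+pe^{-a})\bigr)$. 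Hence $\E[e^{-\beta H(\sigma)}]=e^{AN^2+B|\sigma|^2}$. For the second moment, set $\tau_i:=\sigma_i\sigma_i'$ and $q:=N^{-1}\sum_i\tau_i$; each pair $(i,j)$ contributes $1$ when $\tau_i\tau_j=-1$ and $\exp(A_2+B_2\sigma_i\sigma_j)$ when $\tau_i\tau_j=+1$, where $A_2, B_2$ are obtained from $A, B$ by replacing $a$ with $2a$. Using the elementary identities $|\{(i,j):\tau_i\tau_j=1\}|=N^2(1+q^2)/2$ and $\sum_{(i,j):\tau_i\tau_j=1}\sigma_i\sigma_j=(|\sigma|^2+|\sigma'|^2)/2$, one arrives at the key representation
\[
\frac{\E[Z_N(\beta,g)^2]}{(\E Z_N(\beta,g))^2}=e^{N^2(A_2/2-2A)}\,\E_{\nu_N\otimes\nu_N}\!\Bigl[\exp\Bigl(\tfrac{A_2N^2}{2}q^2+\bigl(\tfrac{B_2}{2}-B\bigr)(|\sigma|^2+|\sigma'|^2)\Bigr)\Bigr],
\]
where the tilted probability measure $\nu_N(\sigma)\propto g(|\sigma|/\sqrt N)\,e^{B|\sigma|^2}$ is a Curie-Weiss Gibbs measure at inverse temperature $2NB$, reweighted by $g$.

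Taylor expansion of $A, A_2, B, B_2$ around $a=0$, followed by the substitution $a=\beta/(2Np)$, yields
\[
N^2(A_2/2-2A)=O\bigl((N^2p^3)^{-1}\bigr),\quad A_2N^2/2=O(p^{-1}),\quad B_2/2-B=O\bigl((N^3p^2)^{-1}\bigr),\quad B=\tfrac{\beta}{2N}+O\bigl((N^3p^2)^{-1}\bigr).
\]
The first estimate shows that the prefactor $e^{N^2(A_2/2-2A)}\to 1$ precisely under the hypothesis $N^2p^3\to\infty$; this is where the threshold of the theorem originates. The inner exponent equals $c_N^{(1)}Nq^2+c_N^{(2)}(|\sigma|^2+|\sigma'|^2)/N$ with $c_N^{(1)}=O((Np)^{-1})$ and $c_N^{(2)}=O((N^2p^2)^{-1})$, both vanishing (use $(Np)^2=N^2p^2\to\infty$). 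Moreover $2NB\to\beta$, so since $\beta<1$ the measure $\nu_N$ sits asymptotically in the high-temperature phase of the Curie-Weiss model.

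It therefore remains to prove $\E_{\nu_N\otimes\nu_N}[\exp(c_N^{(1)}Nq^2+c_N^{(2)}(|\sigma|^2+|\sigma'|^2)/N)]\to 1$. Classical results on the Curie-Weiss model at $\beta<1$ (compare the references cited in the introduction) provide uniformly bounded exponential moments $\E_{\nu_N}[\exp(t|\sigma|^2/N)]$ for $t$ in a fixed neighborhood of $0$. For the overlap, the Hubbard-Stratonovich identity writes $\exp(tNq^2)$ as a Gaussian average of $\exp(\xi\sqrt{2t/N}\,\sum_i\sigma_i\sigma_i')$, reducing the required moment bound to uniform control of a Curie-Weiss MGF with small external field, which again holds throughout the high-temperature phase. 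Combining this uniform integrability with convergence of the integrand to $1$ in $\nu_N\otimes\nu_N$-probability (because $c_N^{(i)}\to 0$ while $Nq^2$ and $|\sigma|^2/N$ are $O_P(1)$) yields the desired limit by dominated convergence. The chief technical difficulty is precisely this uniform exponential integrability of $Nq^2$ under $\nu_N\otimes\nu_N$; it succeeds because $\beta<1$ is fixed and the deviation between $\nu_N$ and the standard Curie-Weiss measure at $\beta$ is $o(1)$ uniformly on configurations with $|\sigma|^2=O(N)$.
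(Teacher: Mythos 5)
Your overall strategy coincides with the paper's: a second moment method built on the exact identities $\E[\eee^{-\beta H(\sigma)}]=\exp(N^2A+B|\sigma|^2)$ and $\E[\eee^{-\beta H(\sigma)}\eee^{-\beta H(\sigma')}]=\exp\bigl(\tfrac{A_2}{2}N^2(1+q^2)+\tfrac{B_2}{2}(|\sigma|^2+|\sigma'|^2)\bigr)$ obtained by linearizing $\log(1-p+p\eee^{\gamma x})$ on $\{-1,+1\}$ (your $A$, $B$, $A_2/2$, $B_2/2$ are exactly the $a_0$, $a_1$, $b_0=b_{12}$, $b_1=b_2$ of Lemmas~\ref{EexpH} and~\ref{EexpH1H2}), together with the Taylor expansion showing $N^2(A_2/2-2A)=O((N^2p^3)^{-1})$, which is indeed where the threshold comes from; your coefficient estimates are all correct. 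Where you genuinely diverge is in controlling the sum over pairs: the paper splits pairs into typical and atypical ones (the set $S_N$ in~\eqref{eq:S_N_def}) and bounds the atypical contribution via a three-dimensional local central limit theorem for $(\sigma_i,\tau_i,\sigma_i\tau_i)$, whereas you rewrite $\E[Z_N(\beta,g)^2]/(\E Z_N(\beta,g))^2$ as a tilted expectation under $\nu_N\otimes\nu_N$ and appeal to uniform exponential integrability of $Nq^2$ and $|\sigma|^2/N$. Your packaging is more economical in that it bypasses the precise asymptotics of $\E Z_N(\beta,g)$ (Theorem~\ref{EZNg}), needing only a lower bound on the normalization of $\nu_N$; the paper's route produces those asymptotics as a by-product and then reuses them for Theorem~\ref{theo:magnetization}.

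The one step you must not leave to a citation is the bound $\sup_N\E_{\nu_N\otimes\nu_N}[\exp(t_0Nq^2)]<\infty$ for some fixed $t_0>0$; without it the argument fails, since $c_N^{(1)}Nq^2$ can be as large as $c_N^{(1)}N\asymp 1/p\to\infty$ on atypical pairs. Two points need care. First, $\nu_N$ is reweighted by $g$ and is not a product measure, so the Hubbard--Stratonovich reduction you sketch produces an inhomogeneous external field aligned with $\sigma'$ acting on a non-product reference measure; you should first discard $g$ by bounding it above by $\|g\|_\infty$ and showing that the normalization $2^{-N}\sum_\sigma g(|\sigma|/\sqrt N)\eee^{B|\sigma|^2}$ stays bounded away from $0$ (this is exactly the content of~\eqref{eq:sum_g_exp_de_Moivre}, proved via de Moivre--Laplace and the tail bound~\eqref{eq:sum_g_exp_de_Moivre_aux2}), and then decouple all three quadratic forms $|\sigma|^2$, $|\sigma'|^2$, $\langle\sigma,\sigma'\rangle^2$ simultaneously by Gaussian integration --- or simply invoke the three-dimensional local CLT bound on $\nu_N(k,l,m)$ as the paper does, which is the rigorous form of precisely this exponential moment. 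Second, the admissible $t_0$ must be small depending on $\beta$: tilting $\nu_N$ by $\exp(t|\sigma|^2/N)$ shifts the effective inverse temperature from $2NB\to\beta$ to $\beta+2t$, so you need $t<(1-\beta)/2$ (and an analogous restriction for the overlap term); this is harmless since $\beta<1$ is fixed, but the neighborhood of $0$ cannot be chosen independently of $\beta$. With these points filled in, your proof is complete and correct.
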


\begin{remark}
We restrict ourselves to the consideration of non-negative $g \in \mathcal{C}_b(\R)$, $g\not \equiv 0$ in Theorem \ref{theoCW1} to avoid a separate consideration of those cases where $\E Z_N(\beta,g)=0$. For our purposes, this will be sufficient, however, a more general statement would, in principle, be possible.
\end{remark}

On our way to prove Theorem \ref{theoCW1} we will analyze the expectation and the covariances of $Z_N(\beta,g)$ and we find:

\begin{theorem}\label{EZNg}
Fix $\beta \in (0,1)$.
For any non-negative $g \in \mathcal{C}_b(\R)$, $g\not \equiv 0$, the expected value of the (generalized) partition function $\E Z_N (\beta,g)$ has the following asymptotics, where $\xi$ is a standard normally distributed random variable.
\begin{enumerate}
\item[(a)]
If $pN \to \infty$, we have
\begin{equation}\label{eq EZNg}
\E Z_N (\beta,g)
\sim
\exp\left( \frac{-\beta^2}{8}+ N^2p \left(\cosh\left(\frac{\beta}{2Np}\right)-1\right)\right)2^N
\E_\xi [g(\xi) \eee^{\frac \beta 2 \xi^2}].
\end{equation}
\item[(b)]
 For $p^3N^2 \to \infty$, \eqref{eq EZNg} boils down to
\begin{equation}\label{eq EZNg:2}
\E Z_N (\beta,g)\sim \eee^{\frac{(1-p)\beta^2}{8p}}2^N\E_\xi [g(\xi) \eee^{\frac \beta 2 \xi^2}].
\end{equation}
\item[(c)]
If $g \equiv 1$ and $p^3N^2 \to \infty$, we have  for the partition function
\begin{equation}\label{eq EZNg:3}
\E Z_N (\beta)\sim \eee^{\frac{(1-p)\beta^2}{8p}} \frac {2^N} {\sqrt{1-\beta}}.
\end{equation}
\end{enumerate}
\end{theorem}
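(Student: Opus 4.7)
The plan is to first compute $\E[\eee^{-\beta H(\sigma)}]$ in closed form by exploiting independence of the edge indicators $\varepsilon_{i,j}$, and then to analyse the remaining sum over $\sigma$ via a central limit theorem for the simple random walk combined with a uniform exponential moment bound.

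\textbf{Step 1 (exact factorisation).} Set $a := \beta/(2Np)$, $A := p\eee^a+1-p$, $B := p\eee^{-a}+1-p$. Since the $\varepsilon_{i,j}$ are independent Bernoulli$(p)$ and $\sigma_i\sigma_j\in\{\pm 1\}$,
\[
\E\bigl[\eee^{-\beta H(\sigma)}\bigr] = \prod_{i,j=1}^N\bigl(p\eee^{a\sigma_i\sigma_j}+1-p\bigr) = A^{(N^2+|\sigma|^2)/2} B^{(N^2-|\sigma|^2)/2} = (AB)^{N^2/2}(A/B)^{|\sigma|^2/2},
\]
where I used that the number of ordered pairs $(i,j)$ with $\sigma_i\sigma_j = \pm 1$ is $(N^2\pm|\sigma|^2)/2$. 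Hence
\[
\E Z_N(\beta,g) = (AB)^{N^2/2}\sum_{\sigma\in\{\pm 1\}^N} g(|\sigma|/\sqrt N)\,(A/B)^{|\sigma|^2/2}.
\]

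\textbf{Step 2 ($\sigma$-independent prefactor).} From the identity $AB = 1+2p(1-p)(\cosh a - 1)$ and $a\to 0$ (which follows from $Np\to\infty$), expanding the logarithm to second order yields
\[
\tfrac{N^2}{2}\log(AB) = N^2p(1-p)(\cosh a-1) + O\bigl(N^2p^2(\cosh a-1)^2\bigr) = N^2p(\cosh a-1)-\tfrac{\beta^2}{8}+o(1),
\]
since $N^2p^2(\cosh a-1) = \beta^2/8 + O(1/(N^2p^2))$ and the remainder is $O(1/(N^2p^2)) = o(1)$. This is exactly the prefactor in part (a). Under the stronger hypothesis $p^3N^2\to\infty$, a further expansion $\cosh a - 1 = a^2/2 + a^4/24 + \ldots$ gives $N^2p(\cosh a-1) = \beta^2/(8p) + O(1/(N^2p^3)) = \beta^2/(8p) + o(1)$, so the whole exponent reduces to $(1-p)\beta^2/(8p)$, giving part (b).

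\textbf{Step 3 ($\sigma$-sum via CLT).} Expanding $\log(A/B) = 2p\sinh a + O(p^2a^3) = \beta/N + O(1/(N^3p^2))$ and writing $m := |\sigma|/\sqrt N$,
\[
(A/B)^{|\sigma|^2/2} = \exp\bigl(\tfrac{\beta_N}{2}m^2\bigr),\qquad \beta_N := \beta + O(1/(N^2p^2)) \underset{N\to\infty}{\longrightarrow} \beta.
\]
Viewing $\sigma = (X_1,\ldots,X_N)$ with $X_i$ i.i.d.\ uniform on $\{\pm 1\}$ makes $|\sigma| = \sum X_i$ a simple random walk, so by the CLT $m$ converges weakly to $\xi\sim\mathcal N(0,1)$, and the $\sigma$-sum equals $2^N\,\E[g(m)\eee^{\beta_Nm^2/2}]$. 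Choose $\delta>0$ with $(1+\delta)\beta<1$ (possible because $\beta<1$). The Hubbard--Stratonovich identity $\eee^{cm^2/2} = \E_\eta[\eee^{\eta m\sqrt c}]$ for $\eta\sim\mathcal N(0,1)$, together with $\cosh t\le\eee^{t^2/2}$ applied to $\E_\sigma[\eee^{t|\sigma|}]=(\cosh t)^N$, yields
\[
\sup_N \E\bigl[\eee^{(1+\delta)\beta_Nm^2/2}\bigr] \le \sup_N(1-(1+\delta)\beta_N)^{-1/2} < \infty.
\]
Uniform integrability together with weak convergence of $m$ and continuity of $g$ gives $\E[g(m)\eee^{\beta_Nm^2/2}] \to \E_\xi[g(\xi)\eee^{\beta\xi^2/2}]$. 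Multiplying by the Step 2 prefactor proves parts (a) and (b); part (c) follows from (b) by specialising to $g\equiv 1$ and computing the standard Gaussian integral $\E[\eee^{\beta\xi^2/2}] = 1/\sqrt{1-\beta}$.

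\textbf{Main obstacle.} The delicate point is the limit passage in Step 3, where the weight $\eee^{\beta_Nm^2/2}$ is unbounded and its exponent depends on $N$; one needs a uniform exponential moment bound for $m^2$ under the binomial law. This is where the hypothesis $\beta<1$ is essential, and where the $O(1/(N^2p^2))$ correction in $\beta_N$ must vanish --- both automatic under $Np\to\infty$ and hence \emph{a fortiori} under $p^3N^2\to\infty$.
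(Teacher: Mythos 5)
Your proposal is correct, and while the first half coincides with the paper's computation, the second half takes a genuinely different and arguably cleaner route. Your Step 1 is exactly the paper's Lemma 3.1 in disguise: writing $f(x)=a_0+a_1x$ on $\{\pm 1\}$ with $a_0=\tfrac12\log(AB)$ and $a_1=\tfrac12\log(A/B)$ gives $\E\,\eee^{-\beta H(\sigma)}=\eee^{N^2a_0+a_1|\sigma|^2}$, and your Step 2 expansion of $\tfrac{N^2}{2}\log(AB)$ reproduces the paper's $A_N(\beta)=-\beta^2/8+N^2p(\cosh(\beta/(2Np))-1)$ term for term. The divergence is in how the $\sigma$-sum is handled. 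The paper splits $\{-1,+1\}^N$ into typical configurations ($|\sigma|^2\le N^2p$), where the error $C_{N,2}|\sigma|^2/(N^3p^2)$ is uniformly $o(1)$, and atypical ones, which are killed using a local limit theorem bound on $\nu_N(k)$ and $\beta<1$; the final Gaussian limit is then obtained via a tail bound $\P[|W_N|\ge u]\le Cu^{-1/\beta}$ imported from Ellis. You instead absorb the entire $\sigma$-dependent correction into a deterministic effective temperature $\beta_N=N\log(A/B)=\beta+O(1/(N^2p^2))$, which removes the need for any typical/atypical decomposition or local limit theorem, and you replace the cited tail bound by a self-contained uniform moment estimate $\E_\sigma[\eee^{cm^2/2}]\le(1-c)^{-1/2}$ via Hubbard--Stratonovich and $\cosh t\le\eee^{t^2/2}$ --- a bound that is both sharper and elementary. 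Two points deserve a sentence more of care than you give them: (i) the convergence in distribution $g(m)\eee^{\beta_Nm^2/2}\to g(\xi)\eee^{\beta\xi^2/2}$ involves an $N$-dependent function, so you should note that $x\mapsto g(x)\eee^{\beta_Nx^2/2}$ converges locally uniformly to its limit (which, with tightness of $m$, justifies the continuous mapping step); and (ii) the bound $(1-(1+\delta)\beta_N)^{-1/2}$ is only valid once $(1+\delta)\beta_N<1$, i.e.\ for $N$ large, which suffices for uniform integrability. Neither is a gap. One trade-off worth noting: the paper's typical/atypical machinery is reused essentially verbatim in the variance estimate (Theorem 1.5), so your shortcut, while preferable for Theorem 1.4 in isolation, would not spare that later work.
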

\begin{remark}
Here,  for two sequences $(a_N)_{N\in\N}$ and $(b_N)_{N\in\N}$ we write $a_N \sim b_N$, if their quotient converges to $1$, as $N\to\infty$.
\end{remark}

The next result shows that in the regime $p^3 N^2\to\infty$ the expectation of $Z_N(\beta, g)$ has a larger order of magnitude than its standard deviation.

\begin{theorem}\label{VarZ2}
Fix $\beta\in (0,1)$ and assume that $p=p(N)$ is such that $p^3 N^2\to \infty$ as $N\to\infty$.  For any non-negative $g \in \mathcal{C}_b(\R)$, $g\not \equiv 0$, we have
\begin{equation}\label{V EZNg2}
\lim_{N\to\infty} \frac{\V(Z_N(\beta,g))}{(\E Z_N(\beta,g))^2} =0.
\end{equation}
\end{theorem}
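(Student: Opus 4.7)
The plan is to compute $\E[Z_N(\beta,g)^2]$ directly from the independence of the edges and to compare it with $(\E Z_N(\beta,g))^2$. Set $\alpha := \beta/(2Np)$, $s_{ij} := \sigma_i\sigma_j$, $t_{ij} := \tau_i\tau_j$, and $\psi(s) := 1 + p(e^{\alpha s}-1)$. Independence of the $(\eps_{i,j})$ gives
$$
\E[Z_N(\beta,g)^2] \;=\; \sum_{\sigma,\tau}g(|\sigma|/\sqrt N)\,g(|\tau|/\sqrt N)\prod_{i,j=1}^{N}\!\left(pe^{\alpha(s_{ij}+t_{ij})}+1-p\right).
$$
The elementary identity $pe^{\alpha(s+t)}+1-p = \psi(s)\psi(t)+p(1-p)(e^{\alpha s}-1)(e^{\alpha t}-1)$ lets us write each edge factor as $\psi(s_{ij})\psi(t_{ij})(1+F_{ij})$, where $F_{ij} := p(1-p)(e^{\alpha s_{ij}}-1)(e^{\alpha t_{ij}}-1)/(\psi(s_{ij})\psi(t_{ij}))$. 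Since $(\E Z_N(\beta,g))^2$ equals the same double sum with $\psi(s_{ij})\psi(t_{ij})$ in place of the full edge factors, we obtain
$$
\frac{\E[Z_N(\beta,g)^2]}{(\E Z_N(\beta,g))^2} \;=\; \tilde\E\!\left[\prod_{i,j=1}^{N}(1+F_{ij})\right],
$$
where $\tilde\E$ is the expectation under the probability measure $\tilde\mu(\sigma,\tau)\propto g(|\sigma|/\sqrt N)g(|\tau|/\sqrt N)\prod_{ij}\psi(s_{ij})\psi(t_{ij})$. Crucially, $\tilde\mu$ is a \emph{product} measure, whose marginals coincide with the annealed distribution analyzed in the proof of Theorem~\ref{EZNg}.

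Next, I would Taylor-expand in $\alpha = O(1/(Np))\to 0$. Since $F_{ij}$ takes only three distinct values as a function of $(s_{ij},t_{ij})\in\{\pm 1\}^2$ (symmetric in $s\leftrightarrow t$), there exist constants $A', B', C'$ with
$$
\log(1+F_{ij}) \;=\; A' + B'(s_{ij}+t_{ij}) + C'\,s_{ij}t_{ij}
$$
for every $(s_{ij},t_{ij})$. Using $\sum_{ij}(s_{ij}+t_{ij}) = N^2(m^2+m'^2)$ and $\sum_{ij}s_{ij}t_{ij} = N^2 q^2$, with $m := |\sigma|/N$, $m' := |\tau|/N$, $q := \tfrac{1}{N}\sum_i\sigma_i\tau_i$, this gives
$$
\sum_{i,j=1}^{N}\log(1+F_{ij}) \;=\; N^2 A' + B'\,N^2(m^2+m'^2) + C'\,N^2 q^2.
$$
A direct expansion of the three values of $F_{ij}$ yields $A' = O(p\alpha^4) = O(1/(N^4 p^3))$, $B' = O(p\alpha^3) = O(1/(N^3 p^2))$, and $C' = \tfrac{(1-p)\beta^2}{4pN^2}(1+o(1))$. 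Hence, with $x := |\sigma|/\sqrt N$, $y := |\tau|/\sqrt N$,
$$
\log\!\prod_{ij}(1+F_{ij}) \;=\; \frac{(1-p)\beta^2 q^2}{4p}(1+o(1)) + O\!\left(\frac{1}{N^2 p^3}\right) + O\!\left(\frac{x^2+y^2}{N^2 p^2}\right).
$$
The hypothesis $p^3 N^2\to\infty$ is used precisely to kill the $A'$-term uniformly in $\sigma,\tau$; the $B'$-term is $o(1)$ on compact sets of $x,y$ (where $pN\to\infty$ suffices).

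It remains to integrate $\prod_{ij}(1+F_{ij})$ against $\tilde\mu$ and show the result tends to $1$. The marginals of $\tilde\mu$ are the annealed distributions from Theorem~\ref{EZNg}, so the same CLT-type analysis that underlies Theorem~\ref{EZNg}(b) shows that $|\sigma|/\sqrt N$ is tight under $\tilde\mu$ with an asymptotic law given by a Gaussian of variance $1/(1-\beta)$ tilted by $g$; likewise for $|\tau|/\sqrt N$. Conditional on $(|\sigma|,|\tau|) = (u,v)$, $\sigma$ and $\tau$ are uniform on their respective level sets, and a standard hypergeometric computation gives $\E[q^2\mid u,v] = O(1/N)$ uniformly on compact sets in $x,y$. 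Therefore $(1-p)\beta^2 q^2/(4p) = O_{\P}(1/(pN)) \to 0$, and a Gaussian-type moment bound (available since the constant $(1-p)\beta^2/(4p) = o(N)$ under $pN\to\infty$) yields uniform integrability. Thus $\tilde\E[\prod_{ij}(1+F_{ij})]\to 1$, equivalently $\E[Z_N^2]/(\E Z_N)^2 \to 1$, which is $\V(Z_N)/(\E Z_N)^2 \to 0$.

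The main obstacle is the bookkeeping in the Taylor expansion: making the $O$-bounds sufficiently uniform, and ruling out contributions from atypical $(\sigma,\tau)$ with large $|\sigma|$ or $|\tau|$ (which are suppressed by the sub-Gaussian tilt $\exp(\beta x^2/2)$ for $\beta<1$). The condition $p^3 N^2\to\infty$ enters exactly through the requirement $N^2|A'| = O(1/(N^2p^3))\to 0$, and is the weakest assumption under which this direct comparison argument closes.
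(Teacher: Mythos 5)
Your proposal is essentially the paper's own argument in a different packaging: the exact linearization of the edge log-factor on $\{\pm1\}^2$ (your $A'+B'(s+t)+C'st$ is the paper's $b_0,b_1,b_2,b_{12}$ decomposition from Lemma~\ref{EexpH1H2}), the identification of $C'N^2q^2=\frac{(1-p)\beta^2}{4p}\,q^2$ as the dangerous term, and the observation that $N^2A'=O(1/(N^2p^3))$ is exactly where $p^3N^2\to\infty$ enters, all coincide with the paper's computation; writing $\E[Z_N^2]/(\E Z_N)^2$ as a tilted product-measure expectation, rather than summing covariances over typical and atypical pairs, is an organizational rather than a substantive difference.

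The one step that is thinner than it needs to be is the last one. Knowing $\E[q^2\mid u,v]=O(1/N)$ and hence $\frac{(1-p)\beta^2}{4p}q^2=O_{\P}(1/(pN))\to0$ does not by itself give $\tilde\E\bigl[\exp\bigl(\frac{(1-p)\beta^2}{4p}q^2\bigr)\bigr]\to1$: you must control an exponential moment of $(\sqrt N q)^2$ with coefficient of order $1/(Np)$, which requires a sub-Gaussian tail bound for $|\sigma\tau|/\sqrt N$ uniform in $(|\sigma|,|\tau|)$, integrated against the Gaussian tails of $|\sigma|/\sqrt N$ and $|\tau|/\sqrt N$ rather than restricted to compact sets. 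This is precisely what the paper's Step~2 supplies via the three-dimensional local CLT bound $\nu_N(k,l,m)\le C\,2^{2N}N^{-3/2}\eee^{-(k^2+l^2+m^2)/(2N)}$; a Hoeffding-type bound for sampling from the slices would serve equally well, so the gap is fillable, but as written your ``Gaussian-type moment bound'' is asserted rather than established, and it is the crux of the theorem.
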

The article is organized in the following way. In the next section we will state and prove some technical results in which we prepare the proofs of our central statements. In Section 3 we will prove some auxiliary results on the expectation and the variance of $\mathbb E[\eee^{-\beta H(\sigma)}]$. Finally, in Section 4, we prove Theorems \ref{theoCW1}, \ref{EZNg}, and  \ref{VarZ2}. The latter also yields the proof of Theorem \ref{theo:magnetization}.

\section{Technical preparation}
In this section we will prepare for the proof of  Theorems \ref{theoCW1}, \ref{EZNg}, and \ref{VarZ2}. The reader may skip this technical section and return to it when necessary.
We will frequently encounter the following function:
$$
F(p, z) := \log (1 - p + p\eee^{z}).
$$
For the purpose of the remainder of this section, $p$ and $z$ are arbitrary complex variables. In particular, $p$ is not required to denote a probability. We will need the power series expansion of $F(p,z)$ around the point $(0,0)$.
For $|p|< 2$ and $|z|< z_0$ with sufficiently small $z_0>0$, we have $|p\eee^z-p| < 1$. Thus, $F(p,z)$ is an analytic function of two complex variables $p$ and $z$ on the domain
$$
\mathcal D = \{(p,z)\in\C^2\colon |p| < 2, |z|<z_0\}.
$$
As such, it has a power series expansion which converges uniformly and absolutely on compact subsets of this domain. Note that by absolute convergence, we can re-arrange and re-group the terms in an arbitrary way.
The first few terms of the power series expansion are
$$
F(p, z) = p z + \frac{p (1-p)}2 z^2+ \frac{ p(2p^2-3p+1)}6 z^3+ \frac {p(-6p^3+12p^2-7p+1)}{24}z^4+  \mathcal{O}(z^5).
$$
\begin{lemma}\label{taylor}
We have
\begin{equation}\label{f}
F(p, z)=  p \sum_{k=1}^{\infty} \frac{P_k(p)}{k!} z^k
\end{equation}
where  $P_k(p)$ is a power series in $p$ with constant term $P_k(0)=1$ for all $k\in\N$.
\end{lemma}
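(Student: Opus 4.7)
The plan is to start from the identity $F(p,z) = \log(1 + p(e^z - 1))$. Since $|p(e^z - 1)| < 1$ on $\mathcal{D}$, as already noted in the excerpt, substituting into the series $\log(1+u) = \sum_{n\ge 1} (-1)^{n-1} u^n / n$ gives
\begin{equation*}
F(p,z) = \sum_{n=1}^\infty \frac{(-1)^{n-1}}{n}\, p^n\, (e^z - 1)^n,
\end{equation*}
an absolutely convergent double series on $\mathcal{D}$.

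Next I would rearrange and collect powers of $z$. Using the standard identity $(e^z - 1)^n = \sum_{k \ge n} \frac{n!\, S(k,n)}{k!} z^k$, with $S(k,n)$ the Stirling numbers of the second kind, absolute convergence justifies swapping the two summations, producing
\begin{equation*}
F(p,z) = \sum_{k=1}^\infty \frac{z^k}{k!} \sum_{n=1}^k (-1)^{n-1} (n-1)!\, S(k,n)\, p^n = p \sum_{k=1}^\infty \frac{z^k}{k!}\, P_k(p),
\end{equation*}
where $P_k(p) := \sum_{n=1}^k (-1)^{n-1} (n-1)!\, S(k,n)\, p^{n-1}$ is a polynomial (hence a convergent power series) in $p$.

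Finally, evaluating at $p = 0$ kills every term in the defining sum except $n = 1$, leaving $P_k(0) = S(k,1) = 1$, which is the claim. As a cross-check independent of Stirling numbers, $\partial_p F(p,z)|_{p=0} = (e^z - 1)/(1 - p + p e^z)|_{p=0} = e^z - 1 = \sum_{k \ge 1} z^k/k!$, while the proposed expansion gives $\sum_{k \ge 1} z^k P_k(0)/k!$, and matching coefficients again yields $P_k(0) = 1$. I do not foresee any real obstacle; the only delicate point is the interchange of the two summations, which follows immediately from the absolute convergence on compact subsets of $\mathcal{D}$ established in the excerpt.
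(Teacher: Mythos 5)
Your proof is correct, but it takes a more computational route than the paper. The paper's own argument is a two-line abstract one: since $F(0,z)=F(p,0)=0$, the double power series can be written in the form $p\sum_{k\geq 1}P_k(p)z^k/k!$, and the constant terms $P_k(0)$ are read off from $\partial_p F(p,z)|_{p=0}=\eee^z-1$ --- which is exactly the ``cross-check'' you append at the end; in other words, your closing remark \emph{is} the paper's entire proof of the normalization $P_k(0)=1$. Your main line of argument instead expands $\log(1+p(\eee^z-1))$ and reorganizes via the exponential generating function of the Stirling numbers of the second kind, yielding the explicit closed form $P_k(p)=\sum_{n=1}^{k}(-1)^{n-1}(n-1)!\,S(k,n)\,p^{n-1}$. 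This buys strictly more than the lemma asks for: it exhibits $P_k$ as an explicit polynomial of degree $k-1$ (a fact the paper asserts without proof and declares unnecessary), at the cost of invoking the Stirling identity and a summation interchange. The interchange is indeed harmless: since $S(k,n)\geq 0$, the double series of absolute values is dominated by $\sum_{n\geq 1}|p|^n(\eee^{|z|}-1)^n/n$, which converges on $\mathcal D$ once $z_0$ is small enough that $|p(\eee^z-1)|<1$, as the paper has already arranged. Both proofs are complete; the paper's is shorter, yours is more informative.
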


\begin{proof}
Since $F(0,z)=F(p,0)=0$, we can extract $p$ and write the expansion in the form~\eqref{f}. To see that $P_k(0)=1$, we need to check that
$$
\left.\frac {\dd^k}{\dd z^k} \frac {\dd}{\dd p} F(p,z) \right|_{(p,z)=(0,0)} = 1,\qquad k\in\N.
$$
But this is trivial because $\frac {\dd}{\dd p} F(p,z)|_{p=0} = \eee^z-1$. In fact, $P_k(p)$ is even a polynomial in $p$, but we will not need this.
\end{proof}

We will  several times use the following corollary of the above lemma.
\begin{corollary}\label{cor:cosh_subtract}
For $(p,z)\in \mathcal D$ we have
\begin{align}
\frac{F(p,z) + F(p,-z)}{2}
&=
p(\cosh (z) - 1) - \frac {p^2z^2}{2} + p^2z^4 Q(p,z),\label{cor:1}\\
\frac{F(p,z) - F(p,-z)}{2}
&=
pz + pz^3 \tilde Q(p,z),\label{cor:2}
\end{align}
where $Q(p,z)$ and $\tilde Q(p,z)$ are power series of two variables representing analytic functions on $\mathcal D$.
\end{corollary}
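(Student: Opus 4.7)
The plan is to deduce both identities directly from the power series representation in Lemma \ref{taylor}. Since $P_k(0)=1$, write $P_k(p) = 1 + p\, R_k(p)$, where each $R_k$ is a power series in $p$ with the same domain of convergence. Two low-order coefficients must be pinned down explicitly: differentiating $F(p,z) = \log(1-p+pe^{z})$ in $z$ at $z=0$ gives $\partial_z F(p,0) = p$ and $\partial_z^{2} F(p,0) = p(1-p)$, hence $P_1 \equiv 1$ and $P_2(p) = 1-p$, i.e.\ $R_1 \equiv 0$ and $R_2 \equiv -1$. These identifications are also immediately visible in the explicit Taylor expansion listed just above Lemma \ref{taylor}.

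Next, I would split $F$ into its even and odd parts in $z$. Since only even-indexed $k$ survive, the symmetric part reads
\begin{equation*}
\frac{F(p,z) + F(p,-z)}{2}
= p\sum_{m\geq 1}\frac{P_{2m}(p)}{(2m)!}\, z^{2m}
= p(\cosh z - 1) + p^{2}\sum_{m\geq 1}\frac{R_{2m}(p)}{(2m)!}\, z^{2m}.
\end{equation*}
The $m=1$ term on the right equals $-\tfrac{p^{2}z^{2}}{2}$ by $R_2\equiv -1$, and the tail $m\geq 2$ can be factored as $p^{2}z^{4}\,Q(p,z)$ with $Q(p,z):=\sum_{m\geq 2} R_{2m}(p)\, z^{2m-4}/(2m)!$, yielding \eqref{cor:1}. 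The antisymmetric part is treated in the same way: only odd-indexed $k$ survive, the $m=0$ term contributes exactly $pz$ thanks to $P_1\equiv 1$, and the remaining odd powers $z^{2m+1}$ for $m\geq 1$ group as $pz^{3}\,\tilde Q(p,z)$ with $\tilde Q(p,z):=\sum_{m\geq 1} P_{2m+1}(p)\, z^{2m-2}/(2m+1)!$, giving \eqref{cor:2}.

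It remains to verify that $Q$ and $\tilde Q$ are analytic on $\mathcal D$. This follows because the series of Lemma \ref{taylor} converges absolutely and uniformly on compact subsets of $\mathcal D$, so the rearranged tails defining $Q$ and $\tilde Q$ inherit that absolute convergence and hence represent analytic functions on $\mathcal D$. The computation is otherwise routine; the only real subtlety is the explicit identification $P_1\equiv 1$ and $P_2(p)=1-p$, which is what pins down the precise form of the subtracted terms $pz$ and $-\tfrac{p^{2}z^{2}}{2}$ in the statement.
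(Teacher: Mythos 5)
Your proof is correct and follows essentially the same route as the paper: split the series of Lemma \ref{taylor} into even and odd parts in $z$, identify the $z$- and $z^2$-coefficients exactly (the paper reads $P_1\equiv 1$ and $P_2(p)=1-p$ off the displayed low-order expansion, while you verify them by differentiating $F$), and factor the tails as $p^2z^4 Q$ and $pz^3\tilde Q$ using $P_k(0)=1$ and the absolute convergence established before Lemma \ref{taylor}. No gaps.
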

\begin{proof}
Both parts follow from Lemma~\ref{taylor}. To prove~\eqref{cor:1}, note that when adding $F(p,z)$ and $F(p,-z)$, all terms with odd powers of $z$ cancel, namely
$$
\frac{F(p,z)+F(p,-z)}{2} = p\sum_{n=1}^\infty \frac{1}{(2n)!} P_{2n}(p) z^{2n}.
$$
Recalling the Taylor series of $\cosh(z)-1 = \sum_{n=1}^\infty \frac{z^{2n}}{(2n)!}$, we can write
$$
\frac{F(p,z) + F(p,-z)}{2} - p(\cosh (z) - 1) = \sum_{n=1}^\infty \frac{p}{(2n)!} (P_{2n}(p)-1) z^{2n}.
$$
The term corresponding to  $n=1$ is $-p^2 z^2/2$, whereas all other terms contain the factor $p^2 z^4$ because the term $P_{2n}(0)=1$ cancels. This proves~\eqref{cor:1}.

To prove~\eqref{cor:2} note that when subtracting $F(p,z)$ and $F(p,-z)$, all terms with even powers of $z$ cancel, namely
$$
\frac{F(p,z)-F(p,-z)}{2} = \sum_{n=1}^\infty \frac{p}{(2n-1)!} P_{2n-1}(p) z^{2n-1}.
$$
The term corresponding to $n=1$ is $pz$, while all other terms contain the factor $pz^3$.
\end{proof}

\section{Expectation and variance}
Let us first define some quantities that will appear in the sequel.
Fix some $\beta\in (0,1)$ once and for all.  Set
$$
\gamma = \gamma_N := \frac{\beta}{2Np}.
$$
Recall that we assume  $pN\to\infty$ and hence $\gamma_N\to 0$ as $N\to\infty$.
Corollary~\ref{cor:cosh_subtract} enables us to compute the expectation and the covariances of $\eee^{-\beta H(\sigma)}$ asymptotically.

\begin{lemma}\label{EexpH}
For all $p=p(N)$ such that $pN \to \infty$ and all $\sigma\in \{-1, +1\}^N$ we have
\begin{multline*}
\E \eee^{-\beta H(\sigma)}=
\exp\left( -\frac{\beta^2}{8}+N^2p \left(\cosh\left(\frac{\beta}{2Np}\right)-1\right)\right.\\
\left. +\frac{\beta}{2N}|\sigma|^2+ \frac1{N^2p^2}\left(C_{N,1} + C_{N,2}\frac{|\sigma|^2}N\right)\right).
\end{multline*}
Here, $(C_{N,1})_{N\in\N}$ and $(C_{N,2})_{N\in\N}$ are bounded sequences that do not depend on $\sigma$.
\end{lemma}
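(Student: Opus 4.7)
\textbf{Proof plan for Lemma~\ref{EexpH}.}
The starting point is that the random variables $\vep_{i,j}$ are independent and $-\beta H(\sigma) = \gamma \sum_{i,j=1}^N \vep_{i,j}\sigma_i\sigma_j$ with $\gamma=\beta/(2Np)$. Factoring the expectation over the independent edges and using that $\sigma_i\sigma_j\in\{-1,+1\}$, each factor equals $1-p+p\eee^{\gamma\sigma_i\sigma_j}=\exp F(p,\gamma\sigma_i\sigma_j)$, so
$$
\log\E\eee^{-\beta H(\sigma)} \;=\; \sum_{i,j=1}^N F(p,\gamma\sigma_i\sigma_j).
$$

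Next I would split the $N^2$ pairs according to the sign of $\sigma_i\sigma_j$. Setting $N_\pm=\#\{(i,j):\sigma_i\sigma_j=\pm 1\}$, the identities $N_++N_-=N^2$ and $N_+-N_-=\sum_{i,j}\sigma_i\sigma_j=|\sigma|^2$ give $N_\pm=(N^2\pm|\sigma|^2)/2$. Therefore
$$
\log\E\eee^{-\beta H(\sigma)} \;=\; N^2\cdot\frac{F(p,\gamma)+F(p,-\gamma)}{2}\;+\;|\sigma|^2\cdot\frac{F(p,\gamma)-F(p,-\gamma)}{2}.
$$

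Now I plug in Corollary~\ref{cor:cosh_subtract}. Since $p\in(0,1]$ and $\gamma_N\to 0$, the pair $(p,\gamma)$ lies in a fixed compact subset of the analyticity domain $\mathcal D$ for all $N$ large enough, hence $Q(p,\gamma)$ and $\tilde Q(p,\gamma)$ are uniformly bounded. This yields
\begin{align*}
\log\E\eee^{-\beta H(\sigma)}
&= N^2 p(\cosh\gamma-1) - \tfrac{N^2p^2\gamma^2}{2} + N^2p^2\gamma^4 Q(p,\gamma)\\
&\qquad + |\sigma|^2 p\gamma + |\sigma|^2 p\gamma^3 \tilde Q(p,\gamma).
\end{align*}
Substituting $\gamma=\beta/(2Np)$ converts the four ``main'' terms into exactly those appearing in the statement: $N^2p(\cosh(\beta/2Np)-1)$, $-\beta^2/8$, and $\beta|\sigma|^2/(2N)$. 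The two remainder terms take the form
$$
N^2p^2\gamma^4 Q(p,\gamma) \;=\; \frac{\beta^4\,Q(p,\gamma)}{16\,N^2p^2},\qquad
|\sigma|^2 p\gamma^3 \tilde Q(p,\gamma) \;=\; \frac{|\sigma|^2\,\beta^3\,\tilde Q(p,\gamma)}{8\,N^3p^2},
$$
so setting $C_{N,1}:=\beta^4 Q(p_N,\gamma_N)/16$ and $C_{N,2}:=\beta^3\tilde Q(p_N,\gamma_N)/8$ gives bounded sequences independent of $\sigma$, and collecting the two terms produces the claimed factor $\frac{1}{N^2p^2}(C_{N,1}+C_{N,2}|\sigma|^2/N)$. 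There is no real obstacle here; the only care required is to verify that $(p,\gamma)$ stays in a domain where Corollary~\ref{cor:cosh_subtract} applies with uniformly bounded $Q,\tilde Q$, which is immediate from $p\in(0,1]$ and $\gamma_N\to 0$ under $Np\to\infty$.
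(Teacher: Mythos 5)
Your proposal is correct and follows essentially the same route as the paper: factor the expectation over independent edges, reduce $\sum_{i,j}F(p,\gamma\sigma_i\sigma_j)$ to $N^2\frac{F(p,\gamma)+F(p,-\gamma)}{2}+|\sigma|^2\frac{F(p,\gamma)-F(p,-\gamma)}{2}$ (the paper phrases this as linearizing $f(x)=a_0+a_1x$ on $\{-1,+1\}$, which is the same computation as your sign-counting), and then apply Corollary~\ref{cor:cosh_subtract} with uniform boundedness of $Q,\tilde Q$ on a compact subset of $\mathcal D$. Your constants $C_{N,1}=\beta^4Q/16$ and $C_{N,2}=\beta^3\tilde Q/8$ match the paper's exactly.
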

\begin{remark}\label{remsEexpH1}
For constant $p\in (0,1]$ we obtain the simpler formula
$$
\E \eee^{-\beta H(\sigma)}=
\exp\left( \frac{(1-p)\beta^2}{8p}+\frac{\beta}{2N}|\sigma|^2+\mathcal{O}\left(\frac1{N^2}\right)\left(\frac{|\sigma|^2}N+1\right)\right),
$$
where the $\mathcal{O}$-term is uniform in $\sigma \in \{-1,+1\}^N$.
\end{remark}
\begin{remark}\label{rem:cosh}
We can write the $\cosh$-term as a Taylor series
$$
N^2p \left(\cosh\left(\frac{\beta}{2Np}\right)-1\right)
=
N^2p \sum_{k=1}^\infty \frac 1{(2k)!} \left(\frac{\beta}{2Np}\right)^{2k}
=\frac{\beta^2}{8p} + \frac{\beta^4}{24\cdot 16 N^2 p^3} +\ldots.
$$
Note that if we assume for a moment that $pN^{1+\eps} \to \infty$ for some $\eps>0$, then  the sum on the right-hand side only consists of finitely many term that are not $o(1)$. There is a cascade of new  non-negligible summands entering the sum at
$$
p = \mathrm{const.} N^{-\frac{2k-2}{2k-1}}, \quad  k = 2,3,\ldots.
$$
It is the term corresponding to $k=2$ which is responsible for the fact that our results require the assumption $N^2 p^3 \to \infty$.
\end{remark}

\begin{proof}[Proof of Lemma~\ref{EexpH}]
Recall the notation $\gamma = \beta/(2Np)$. We need to study
$$
\E \eee^{-\beta H(\sigma)}
= \E \left[ \eee^{\gamma \sum_{i,j=1}^N \vep_{i,j} \sigma_i \sigma_j} \right]
= \prod_{i,j=1}^N \E \left[ \eee^{\gamma \vep_{i,j} \sigma_i \sigma_j}\right]
=\prod_{i,j=1}^N\left(1-p + p \eee^{\gamma \sigma_i \sigma_j}\right).
$$
Defining $f(x) = f(x; p,\gamma) = \log (1-p + p\eee^{\gamma x})$, we can write
$$
\E \eee^{-\beta H(\sigma)}
=
\exp\left(\sum_{i,j=1}^N \log (1-p + p\eee^{\gamma \sigma_i \sigma_j})\right)
=\exp\left(\sum_{i,j=1}^N f(\sigma_i \sigma_j)\right).
$$
Observe that the argument of $f$, i.e.~$\sigma_i \sigma_j$, can only take the two values $\pm 1$. For these values we can linearize the function $f$, i.e.\ we can write
$$
f(x) = a_0 + a_1 x, \qquad x \in \{-1,+1\},
$$
where $a_0= a_0(p,\gamma)$ and $a_1=a_1(p,\gamma)$ are given by
\begin{align*}
a_0
&= \frac  {f(1) + f(-1)}{2} =\frac{F(p,\gamma)+F(p,-\gamma)}{2}
,\\
a_1
&=
\frac {f(1)-f(-1)}2=\frac{F(p,\gamma)-F(p,-\gamma)}{2},
\end{align*}
and we recall that $F(p,z)=\log (1-p+p\eee^{z})$.
From here we obtain
$$
\E  \eee^{-\beta H(\sigma)}  =
\exp\left(\sum_{i,j=1}^N f(\sigma_i \sigma_j)\right)=
\exp\left(\sum_{i,j=1}^N (a_0+a_1\sigma_i \sigma_j)\right)
=
\exp\left(N^2 a_0+a_1|\sigma|^2\right).
$$

\vspace*{2mm}
\noindent
\textit{Step 1.}
Let us first consider the term $N^2a_0$.
To compute the expansion of $a_0$ we use Corollary~\ref{cor:cosh_subtract} with $(p,z)=(p,\gamma)$:
$$
a_0 = p(\cosh \gamma-1) -\frac{\gamma^2 p^2}2+p^2\gamma^4 Q_1(p,\gamma),
$$
where $Q_1(p,\gamma)$ is an analytic function that is uniformly bounded by some constant $C$ over the region $|p|\leq 1$, $|\gamma|\leq z_0/2$. Note that the latter condition is satisfied for $N$ large enough since $\gamma= \gamma_N\to 0$ as $N\to\infty$. We set  $C_{N,1}: = \beta^4 Q_1(p,\gamma)/16$.
 Thus,
\begin{equation*}
N^2 a_0
=  N^2p\left(\cosh\left(\frac {\beta}{2Np}\right)-1\right) -\frac{\beta^2}8 +  \frac {C_{N,1}}{N^2p^2}
\end{equation*}
and, of course, the constant $C_{N,1}$ does not depend on $\sigma$ and satisfies $|C_{N,1}|\leq C$.

\vspace*{2mm}
\noindent
\textit{Step 2.}
Let us now turn to the term $a_1|\sigma|^2$. By Corollary~\ref{cor:cosh_subtract} we have
$$
a_1= p\gamma + p\gamma^3 Q_2(p,\gamma),
$$
where $Q_2(p,\gamma)$ is an analytic function that is uniformly bounded by some constant $C$ over the region $|p|\leq 1$, $|\gamma|\leq z_0/2$.
Thus,
\begin{eqnarray*}
a_1|\sigma|^2 = \frac{\beta}{2N} |\sigma|^2+ \frac {|\sigma|^2}{N} \frac {C_{N,2}} {N^2 p^2},
\end{eqnarray*}
where $C_{N,2}:=\beta^3 Q_2(p,\gamma)/8$ is again bounded by $C$ in absolute value.
Taking everything together, we obtain the required statement.
\end{proof}

The following lemma is a crucial ingredient in the proof of Theorem \ref{VarZ2}. It is proved using similar ideas as in the proof of Lemma \ref{EexpH}.

\begin{lemma}\label{EexpH1H2}
For $p=p(N)$ such that $pN \to \infty$ and any $\sigma, \tau \in \{-1,+1\}^N$ write $$|\sigma \tau|:=\sum_{i=1}^N \sigma_i \tau_i.$$ Then we have
\begin{multline*}
\E\left[ \eee^{-\beta H(\sigma)} \eee^{-\beta H(\tau)}\right]
=
\exp\left(
\frac{|\sigma|^2+|\tau|^2}N \left(\frac \beta 2 + \frac{C_{N,4}}{N^2p^2} \right) \right.\\
\left.+ \left(\frac{N^2p}2 \left(\cosh\left(\frac{\beta}{Np}\right)-1\right) -\frac{\beta^2}{4} + \frac{C_{N,3}}{N^2p^2}\right) \left(1+\frac{|\sigma\tau|^2}{N^2}\right)
\right).
\end{multline*}
Here, the sequences $(C_{N,3})_{N\in\N}$ and $(C_{N,4})_{N\in\N}$ do not depend on $\sigma, \tau \in \{-1, +1\}^N$ and stay bounded.
\end{lemma}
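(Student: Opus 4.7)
The plan is to mimic the proof of Lemma~\ref{EexpH}, but now working with two spin configurations simultaneously. Using independence of the $\vep_{i,j}$, we write
\[
\E\bigl[\eee^{-\beta H(\sigma)}\eee^{-\beta H(\tau)}\bigr]
= \prod_{i,j=1}^{N}\E\bigl[\eee^{\gamma\vep_{i,j}(\sigma_i\sigma_j+\tau_i\tau_j)}\bigr]
= \exp\!\Bigl(\sum_{i,j=1}^N f(\sigma_i\sigma_j+\tau_i\tau_j)\Bigr),
\]
where $f(x)=\log(1-p+p\eee^{\gamma x})=F(p,\gamma x)$. The key observation is that $\sigma_i\sigma_j+\tau_i\tau_j$ only takes the three values $\{-2,0,+2\}$, so on this set the function $f$ agrees with a uniquely determined \emph{quadratic} polynomial $f(x)=b_0+b_1 x+b_2 x^2$ with
\[
b_0 = f(0)=0,\qquad
b_1=\frac{f(2)-f(-2)}{4}=\frac{F(p,2\gamma)-F(p,-2\gamma)}{4},\qquad
b_2=\frac{f(2)+f(-2)}{8}=\frac{F(p,2\gamma)+F(p,-2\gamma)}{8}.
\]

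Next, I would evaluate $\sum_{i,j}$ of this quadratic. Direct computation gives $\sum_{i,j}(\sigma_i\sigma_j+\tau_i\tau_j)=|\sigma|^2+|\tau|^2$, while
\[
\sum_{i,j}(\sigma_i\sigma_j+\tau_i\tau_j)^2
= \sum_{i,j}\bigl(2+2\sigma_i\tau_i\sigma_j\tau_j\bigr)
= 2N^2+2|\sigma\tau|^2.
\]
Thus
\[
\sum_{i,j}f(\sigma_i\sigma_j+\tau_i\tau_j)
= b_1\bigl(|\sigma|^2+|\tau|^2\bigr)+2b_2 N^2\Bigl(1+\tfrac{|\sigma\tau|^2}{N^2}\Bigr).
\]

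It remains to expand $b_1$ and $2b_2 N^2$. Apply Corollary~\ref{cor:cosh_subtract} with $z=2\gamma=\beta/(Np)$. From \eqref{cor:2},
\[
b_1 = \tfrac12\bigl(p(2\gamma)+p(2\gamma)^3\tilde Q(p,2\gamma)\bigr)
= p\gamma+4p\gamma^3\tilde Q(p,2\gamma)
= \frac{\beta}{2N}+\frac{1}{N}\cdot\frac{C_{N,4}}{N^2p^2},
\]
with $C_{N,4}=\tfrac{\beta^3}{2}\tilde Q(p,2\gamma)$, which is bounded uniformly in $\sigma,\tau$ since $\gamma_N\to 0$. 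From \eqref{cor:1},
\[
2b_2 N^2 = \tfrac{N^2}{2}\bigl(p(\cosh(2\gamma)-1)-2p^2\gamma^2+16 p^2\gamma^4 Q(p,2\gamma)\bigr)
= \frac{N^2p}{2}\Bigl(\cosh\!\tfrac{\beta}{Np}-1\Bigr)-\frac{\beta^2}{4}+\frac{C_{N,3}}{N^2p^2},
\]
using $N^2 p^2\gamma^2=\beta^2/4$ and collecting the quartic remainder into $C_{N,3}=\tfrac{\beta^4}{2}Q(p,2\gamma)$, bounded and independent of $\sigma,\tau$. Multiplying $b_1(|\sigma|^2+|\tau|^2)$ by $\frac{N}{N}$ to match the announced form and substituting into the exponent yields exactly the claimed expression.

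No single step is genuinely hard; the main thing to get right is the quadratic interpolation on the three-point set $\{-2,0,+2\}$ together with the computation of $\sum(\sigma_i\sigma_j+\tau_i\tau_j)^2$, which is what produces the $|\sigma\tau|^2/N^2$ term — the novel feature distinguishing this estimate from Lemma~\ref{EexpH}. Once that identity is in place, the rest is an application of Corollary~\ref{cor:cosh_subtract} in parallel with Steps~1 and~2 of the previous proof.
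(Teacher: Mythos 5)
Your argument is correct and is essentially the paper's own proof: the paper also factorizes over edges, represents $f(\sigma_i\sigma_j+\tau_i\tau_j)$ exactly on the finite value set (as a multilinear polynomial $b_0+b_1x_1+b_2x_2+b_{12}x_1x_2$ in $x_1=\sigma_i\sigma_j$, $x_2=\tau_i\tau_j$ with $b_0=b_{12}$ and $b_1=b_2$, which is identical to your quadratic interpolation in $x_1+x_2$ on $\{-2,0,2\}$ after using $x_1^2=x_2^2=1$), and then applies Corollary~\ref{cor:cosh_subtract} at $z=2\gamma$ exactly as you do. The coefficients, the $N^2+|\sigma\tau|^2$ grouping, and the bounds on $C_{N,3}$, $C_{N,4}$ all match.
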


\begin{remark}\label{remsEexpH1H2}
For constant $p$ we obtain the simpler formula
\begin{multline*}
\E\left[ \eee^{-\beta H(\sigma)} \eee^{-\beta H(\tau)}\right]
= \\
\exp\left( \frac{(1-p)\beta^2}{4p}+\frac{\beta(|\sigma|^2+|\tau|^2)}{2N}+\frac{\beta^2|\sigma \tau|^2}{4N^2}\left(\frac 1 p -1\right)+\mathcal{O}\left(\frac1{N^2}\right)\left(\frac{|\sigma|^2+|\tau|^2}N+1\right)\right),
\end{multline*}
where the $\mathcal{O}$-term is uniform in $\sigma, \tau \in \{-1,+1\}^N$.
\end{remark}

\begin{remark}
We can expand the $\cosh$-term into a Taylor series
$$
\frac{N^2p}2 \left(\cosh\left(\frac{\beta}{Np}\right)-1\right)
=
\frac{N^2p}{2} \sum_{k=1}^\infty \frac 1{(2k)!} \left(\frac{\beta}{Np}\right)^{2k}
=
\frac{\beta^2}{4p} + \frac{\beta^4}{48 N^2 p^3} +\ldots.
$$
Again there is a cascade of new non-negligible terms appearing in the sum at $p=\mathrm{const.} N^{-\frac{2k-2}{2k-1}}$.
\end{remark}

\begin{proof}[Proof of Lemma~\ref{EexpH1H2}]
The principal idea of the proof is similar to the proof of Lemma \ref{EexpH}.

Observe that along the lines of this proof we obtain
\begin{eqnarray*}
\E\left[ \eee^{-\beta H(\sigma)} \eee^{-\beta H(\tau)}\right]&=&\E \left[ \eee^{\gamma \sum_{i,j=1}^N (\sigma_i \sigma_j+\tau_i \tau_j) \vep_{i,j}} \right]
= \E\exp\left(\sum_{i,j=1}^N f(\sigma_i \sigma_j+ \tau_i \tau_j)\right)
\end{eqnarray*}
with the same definition $f(x) = f(x; p,\gamma) = \log (1-p + p\eee^{\gamma x})$.
Note that $f(\sigma_i \sigma_j+\tau_i \tau_j)$ is a function of the arguments $x_1:=\sigma_i\sigma_j$ and $x_2:= \tau_i\tau_j$ which take values in  $\{-1,+1\}$ only. Any such function can be represented in the form
$$
f(x_1+x_2)= b_0 + b_1 x_1 + b_2 x_2 + b_{12} x_1 x_2,
\quad x_1,x_2\in \{-1,+1\},
$$
for suitable coefficients $b_0,b_1,b_2,b_{12}$ depending on $p$ and $\gamma$. This brings us to a system of four linear equations in the four coefficients:
\begin{align*}
b_0 + b_1+b_2 +b_{12} =& f(2),\\
b_0 + b_1-b_2 -b_{12} =& f(0)=0,\\
b_0 - b_1+b_2 -b_{12} =& f(0)=0,\\
b_0 - b_1-b_2 +b_{12} =& f(-2),
\end{align*}
which is solved by
\begin{align*}
b_0
&= b_{12} = \frac {f(2)+f(-2)}4 =\frac {F(p,2\gamma) + F(p,-2\gamma)}4 , \\
b_1
&= b_{2} = \frac {f(2)-f(-2)}4 =\frac {F(p,2\gamma) - F(p,-2\gamma)}4,
\end{align*}
where we recall that $F(p,z)=\log (1-p+p\eee^{z})$.
Therefore we arrive at
$$
\E\left[ \eee^{-\beta H(\sigma)} \eee^{-\beta H(\tau)}\right]= \exp\left\{N^2 b_0 + b_1 |\sigma|^2+b_2 |\tau|^2+b_{12} |\sigma \tau|^2\right\}.
$$
Next, we approximate $b_0,b_1,b_2,b_{12}$. This is again very similar to the proof of Lemma~\ref{EexpH} with the only difference that we now evaluate $F(p,z)$ in the points $z=2\gamma$ and $z=-2\gamma$. In doing so by means of Corollary~\ref{cor:cosh_subtract}, we see that
$$
b_0 = b_{12} =  \frac{p} 2 (\cosh (2\gamma) - 1)  - p^2\gamma^2  + p^2 \gamma^4 Q_3(p,\gamma),
$$
where $Q_3(p,\gamma)$ is an analytic function that is uniformly bounded by some constant $C$ over the region $|p|\leq 1$, $|\gamma|\leq z_0/2$.
From this we obtain
\begin{eqnarray*}
N^2b_0 +|\sigma \tau|^2b_{12}  = \left(\frac{N^2p}2 \left(\cosh\left(\frac{\beta}{Np}\right)-1\right) -\frac{\beta^2}{4}+
\frac {C_{N,3}}{N^2p^2}\right) \left(1+\frac{|\sigma\tau|^2}{N^2}\right),
\end{eqnarray*}
where $C_{N,3}:=\beta^4 Q_3(p,\gamma)/16$.
Now we take a closer look at $b_1 |\sigma|^2+b_2 |\tau|^2$. By Corollary~\ref{cor:cosh_subtract}, we have
$$
b_1 = b_2 = p\gamma + p\gamma^3 Q_4(p,\gamma),
$$
where $Q_4(p,\gamma)$ is an analytic function that is uniformly bounded by some constant $C$ over the region $|p|\leq 1$, $|\gamma|\leq z_0/2$.  With $C_{N,4}:=\beta^3 Q_4(p,\gamma)/8$
this yields
$$
b_1 |\sigma|^2+b_2 |\tau|^2
=\frac{\beta}{2N}(|\sigma|^2+|\tau|^2) + \frac {C_{N,4}} {N^2p^2} \frac{|\sigma|^2+|\tau|^2}N.
$$
The constants $C_{N,3}$ and $C_{N,4}$  do not depend on $\sigma$ and $\tau$ and are uniformly bounded in $N$. Putting these observations together gives the required statement.
\end{proof}

\section{Proofs of Theorems \ref{theoCW1}, \ref{EZNg}, and \ref{VarZ2}}
Now we are able to prove the theorems stated in Section 1. We start with Theorem~\ref{EZNg}.
\begin{proof}[Proof of Theorem  \ref{EZNg}]
To shorten the notation, let us write
$$
A_{N}(\beta) := -\frac{\beta^2}{8}+N^2p \left(\cosh\left(\frac{\beta}{2Np}\right)-1\right).
$$
Our aim is to prove that
\begin{equation}\label{eq:Z_N_beta_g_proof}
\E Z_N(\beta, g)
\sim
2^N \eee^{A_N(\beta)} \E_{\xi} [g(\xi)\eee^{\frac {\beta}{2}\xi^2}].
\end{equation}
The main idea is to divide the set of spin configurations into ``typical'' and ``atypical'' configurations. To formalize this idea, we denote the set of ``typical'' configurations by
 \begin{equation*}
 T_N := \{\sigma\in \{-1,+1\}^N\colon |\sigma|^2 \leq N^2 p\}.
 \end{equation*}
The configurations in $T_N$ are ``typical'' in the sense that $\sigma$ picked from $\{-1,+1\}^N$ uniformly at random belongs to $T_N$ with probability converging to $1$. This follows from the de Moivre-Laplace central limit theorem. Let $T_N^c:= \{-1,+1\}^N\backslash T_N$ be the set of atypical spin configurations.    Evidently,
\begin{equation}\label{eq:expect_typical_atypical}
\E Z_N(\beta, g)
=
\sum_{\sigma \in T_N} g\left( \frac{|\sigma|}{\sqrt N} \right) \E \eee^{-\beta H(\sigma)}
+
\sum_{\sigma \in T_N^c} g\left( \frac{|\sigma|}{\sqrt N} \right) \E \eee^{-\beta H(\sigma)}.
\end{equation}
Recall the result of Lemma~\ref{EexpH}: For some bounded sequences $(C_{N,1})_{N\in\N}$ and $(C_{N,2})_{N\in\N}$, we have
\begin{equation}\label{eq:expect_e_beta_H_repeat}
\E \eee^{-\beta H(\sigma)}=
\exp\left( A_N(\beta) + \frac{\beta}{2N}|\sigma|^2+ \frac1{N^2p^2}\left(C_{N,1} + C_{N,2}\frac{|\sigma|^2}N\right)\right).
\end{equation}
In the proof we consider the two sums in \eqref{eq:expect_typical_atypical} separately.
We start with the second term.

\vspace*{2mm}
\noindent
\textit{Step 1: Atypical $\sigma$'s.}
We claim that
$$
\sum_{\sigma \in T_N^c} g\left( \frac{|\sigma|}{\sqrt N} \right) \E \eee^{-\beta H(\sigma)}
=  o\left(2^N \eee^{A_N(\beta)}\right).
$$
With $\|g\|_\infty:= \sup_{t\in\R} |g(t)| < \infty$ we have
\begin{align*}
\left|\sum_{\sigma \in T_N^c} g\left( \frac{|\sigma|}{\sqrt N} \right) \E \eee^{-\beta H(\sigma)}\right|
&\leq
\|g\|_\infty \sum_{\sigma \in T_N^c}\E \eee^{-\beta H(\sigma)}\\
&\leq
\|g\|_\infty \sum_{\sigma \in T_N^c} \eee^{A_N(\beta) + \frac{\beta}{2N}|\sigma|^2+ \frac1{N^2p^2}\left(C_{N,1} + C_{N,2}\frac{|\sigma|^2}N\right)}\\
&=
\|g\|_\infty \eee^{A_N(\beta)} \sum_{k\in\Z : |k|>N\sqrt p}
\eee^{\frac{\beta}{2N} k^2 + \frac1{N^2p^2}\left(C_{N,1} + C_{N,2}\frac{k^2}N\right)} \nu_N(k),
\end{align*}
where $\nu_N(k)$ is the number of $\sigma\in \{-1,+1\}^N$ such that $|\sigma| = k$. By the Local Limit Theorem, we have
$$
2^{-N}\nu_N(k) \leq \frac{C}{\sqrt N} \eee^{-\frac{k^2}{2N}}, \quad k\in\Z,
$$
where $C$ is an absolute constant. Using this estimate, we obtain
\begin{align*}
\left|\sum_{\sigma \in T_N^c} g\left( \frac{|\sigma|}{\sqrt N} \right) \E \eee^{-\beta H(\sigma)}\right|
&\leq \|g\|_\infty
C \eee^{A_N(\beta)} \frac {2^N} {\sqrt N} \sum_{k\in\Z : |k|>N\sqrt p}
\eee^{\frac{k^2}{2N} \left(\beta-1 + \frac{2C_{N,1}+2C_{N,2}}{N^2p^2}\right)},
\end{align*}
for sufficiently large $N$, where we used that $k^2/N\geq N^2p/N =Np\to +\infty$.
Since $\beta<1$, and $C_{N,1}$ and $C_{N,2}$ are bounded, there is $\eps>0$ such that
$$
\beta-1 + \frac{2C_{N,1}+2C_{N,2}}{N^2p^2} < -\eps
$$
for all sufficiently large $N$. It follows that
\begin{align*}
\left|\sum_{\sigma \in T_N^c} g\left( \frac{|\sigma|}{\sqrt N} \right) \E \eee^{-\beta H(\sigma)}\right|
&\leq \|g\|_\infty
C \eee^{A_N(\beta)} \frac {2^N} {\sqrt N} \sum_{k\in\Z : |k|>N\sqrt p}
\eee^{-\eps \frac{k^2}{2N}}
\\
&\leq \|g\|_\infty
C \eee^{A_N(\beta)} 2^N \int_{\sqrt{Np}/2}^{\infty} \eee^{-\eps t^2/2} \dd  t,
\end{align*}
by enlargening the range of integration.
This proves the claim since $\sqrt{Np} \to +\infty$ as $N\to\infty$.

\vspace*{2mm}
\noindent
\textit{Step 2: Typical $\sigma$'s.}
For $\sigma\in T_N$ we can simplify the result of Lemma~\ref{EexpH} as follows:
\begin{eqnarray*}
\E \eee^{-\beta H(\sigma)}&=&
\exp\left( A_N(\beta)+\frac{\beta}{2N}|\sigma|^2+\frac{C_{N,5}(\sigma)}{Np}\right),
\end{eqnarray*}
where $C_{N,5}(\sigma)$ is uniformly bounded over all $N$ and all typical $\sigma\in T_N$.
Using this estimate we obtain
\begin{align*}
\sum_{\sigma \in T_N} g\left( \frac{|\sigma|}{\sqrt N} \right) \E \eee^{-\beta H(\sigma)}
&\sim \eee^{A_N(\beta)} \sum_{\sigma \in T_N} g\left( \frac{|\sigma|}{\sqrt N} \right)  \eee^{\frac{\beta}{2N}|\sigma|^2}\\
&\sim \eee^{A_N(\beta)} \sum_{\sigma \in \{-1,+1\}^N} g\left( \frac{|\sigma|}{\sqrt N} \right)  \eee^{\frac{\beta}{2N}|\sigma|^2},
\end{align*}
where the second asymptotic equivalence holds because
$$
\sum_{\sigma \in T_N^c} g\left( \frac{|\sigma|}{\sqrt N} \right)  \eee^{\frac{\beta}{2N}|\sigma|^2} = o\left(\sum_{\sigma \in \{-1,+1\}^N} g\left( \frac{|\sigma|}{\sqrt N} \right)  \eee^{\frac{\beta}{2N}|\sigma|^2}\right).
$$
To see that this is true, note that, as we will show in Step 3 below,
$$
\sum_{\sigma \in \{-1,+1\}^N} g\left( \frac{|\sigma|}{\sqrt N} \right)  \eee^{\frac{\beta}{2N}|\sigma|^2} \sim 2^N \E_{\xi}[g(\xi)\eee^{\frac {\beta}{2}\xi^2}],
$$
with $\E_{\xi}[g(\xi)\eee^{\frac {\beta}{2}\xi^2}] > 0$ (since $g$ is continuous, non-negative and $g\not \equiv 0$), and
$$
\sum_{\sigma \in T_N^c} g\left( \frac{|\sigma|}{\sqrt N} \right)  \eee^{\frac{\beta}{2N}|\sigma|^2} = o(2^N)
$$
by an argument similar to that used in Step 1.

\vspace*{2mm}
\noindent
\textit{Step 3.} It remains to show that
\begin{equation}\label{eq:sum_g_exp_de_Moivre}
\lim_{N\to\infty} \frac 1 {2^N} \sum_{\sigma \in \{-1,+1\}^N} g\left( \frac{|\sigma|}{\sqrt N} \right)  \eee^{\frac{\beta}{2N}|\sigma|^2}
= \E_{\xi}[g(\xi)\eee^{\frac {\beta}{2}\xi^2}],
\end{equation}
where $\xi$ is a standard normal random variable.  Let us consider $\sigma$ as a random element in $\{-1,+1\}^N$ sampled according to the uniform probability  distribution assigning the same probability $2^{-N}$ to each element of $\{-1,+1\}^N$.  By the Central Limit Theorem of de Moivre--Laplace,
$$
\frac{|\sigma|}{\sqrt N} \todistr \xi.
$$
The idea is to use the continuous mapping theorem to prove~\eqref{eq:sum_g_exp_de_Moivre}.  Unfortunately, the function $z\mapsto g(z) \eee^{\frac \beta 2 z^2}$ is not bounded, which means that some work needs to be done to obtain~\eqref{eq:sum_g_exp_de_Moivre}. However, since this function is continuous, the continuous mapping theorem tells us that
\begin{equation}\label{eq:sum_g_exp_de_Moivre_aux1}
W_N := g\left( \frac{|\sigma|}{\sqrt N} \right)  \eee^{\frac{\beta}{2N}|\sigma|^2} \todistr g(\xi)\eee^{\frac {\beta}{2}\xi^2}.
\end{equation}
As shown in~\cite{Ellis-EntropyLargeDeviationsAndStatisticalMechanics}, Proof of Theorem V.9.4, for $\beta<1$ the random variables $W_N$ satisfy
\begin{equation}\label{eq:sum_g_exp_de_Moivre_aux2}
\sup_{N\in\N} \P_\sigma [|W_N| \geq  u] \leq C u^{-1/\beta} \text{ for all } u>0.
\end{equation}
In fact, \cite{Ellis-EntropyLargeDeviationsAndStatisticalMechanics}, Proof of Theorem V.9.4 only proves this for the $g=1$ (and then obtains $C=2$ for the constant), but since $g$ is bounded, the estimate still holds with some $C=C(g)$. By uniform integrability it follows from~\eqref{eq:sum_g_exp_de_Moivre_aux1} and~\eqref{eq:sum_g_exp_de_Moivre_aux2} that
$$
\lim_{N\to\infty} \E_{\sigma} [W_N] = \E_{\xi} [g(\xi)\eee^{\frac {\beta}{2}\xi^2}],
$$
which completes the proof of~\eqref{eq:sum_g_exp_de_Moivre} and thus the proof of part (a) of Theorem~\ref{EZNg}.

Statement (b) of Theorem~\ref{EZNg} is immediate and statment (c) follows by Gaussian integration.
\end{proof}

We now continue by computing the variance of $Z_N(\beta, g)$.
\begin{proof}[Proof of Theorem \ref{VarZ2}]
The key observation, and the reason, why we need the condition $N^2p^3\to \infty$, is that under this condition
\begin{equation}\label{eq:A_N_beta_asympt}
2A_{N}(\beta) := -\frac{\beta^2}{4}+ 2N^2p \left(\cosh\left(\frac{\beta}{2Np}\right)-1\right) = -\frac{\beta^2}{4}+\frac{\beta^2}{4p}+o(1),
\end{equation}
as well as
\begin{equation}\label{eq:B_N_beta_asympt}
B_{N}(\beta) := -\frac{\beta^2}{4}+\frac{N^2p}{2} \left(\cosh\left(\frac{\beta}{Np}\right)-1\right)
=
-\frac{\beta^2}{4}+\frac{\beta^2}{4p}+o(1).
\end{equation}
Using the shorthands $A_N(\beta)$ and $B_N(\beta)$ we can write the results of Lemmas~\ref{EexpH} and~\ref{EexpH1H2} as follows:
\begin{align}
\E [\eee^{-\beta H(\sigma)}] \E [\eee^{-\beta H(\tau)}]
&= 
\eee^{2A_N(\beta) + \frac{\beta}{2} \frac {|\sigma|^2+|\tau|^2} N + \frac 1 {N^2p^2}   \left(2C_{N,1} + C_{N,2}\frac {|\sigma|^2 + |\tau|^2}N\right)}, \label{eq:cov_computation1}\\
\E [\eee^{-\beta H(\sigma)} \eee^{-\beta H(\tau)}]
&=
\eee^{
\left(B_N(\beta) + \frac{C_{N,3}}{N^2p^2}\right) \left(1+\frac{|\sigma\tau|^2}{N^2}\right)
+
\frac{|\sigma|^2+|\tau|^2}N \left(\frac \beta 2 + \frac{C_{N,4}}{N^2p^2} \right)
}.\label{eq:cov_computation2}
\end{align}
Again, we consider ``typical'' pairs of configurations, which in this case, by definition,  lie  in the set
\begin{equation}\label{eq:S_N_def}
S_N:= \{ (\sigma,\tau) \in \{-1,+1\}^N \times \{-1,+1\}^N: |\sigma|^2 \leq N^2 p, \quad |\tau|^2 \leq N^2 p, \quad |\sigma \tau|^2 \leq N^{4/3}\}.
\end{equation}
 We decompose the sums occurring in the definition of the variance into sums over typical and atypical pairs $(\sigma, \tau)$:
\begin{multline*}
\V(Z_N(\beta,g))
= \sum_{(\sigma, \tau) \in S_N} g\left(\frac{|\sigma|}{\sqrt N}\right) g\left(\frac{|\tau|}{\sqrt N}\right) \Co\left(\eee^{-\beta H(\sigma)}, \eee^{-\beta H(\tau)}\right)\\
+ \sum_{(\sigma, \tau) \in S_N^c}g\left(\frac{|\sigma|}{\sqrt N}\right)g\left(\frac{|\tau|}{\sqrt N}\right) \Co\left(\eee^{-\beta H(\sigma)}, \eee^{-\beta H(\tau)}\right).
\end{multline*}

\vspace*{2mm}
\noindent
\textit{Step 1: Typical pairs $(\sigma, \tau)$}.
For $(\sigma, \tau) \in S_N$ straightforward estimates together with the asymptotic behaviour of $A_N(\beta)$ and $B_N(\beta)$ given in \eqref{eq:A_N_beta_asympt} and \eqref{eq:B_N_beta_asympt} show that formulae~\eqref{eq:cov_computation1} and~\eqref{eq:cov_computation2} simplify to
\begin{align*}
\E [\eee^{-\beta H(\sigma)}] \E [\eee^{-\beta H(\tau)}]
&=
\exp\left(-\frac{\beta^2}{4}+\frac{\beta^2}{4p} + \frac \beta 2 \frac{|\sigma|^2 + |\tau|^2}{N} + \eps_N'(\sigma,\tau)\right),\\
\E [\eee^{-\beta H(\sigma)} \eee^{-\beta H(\tau)}]
&=
\exp\left(-\frac{\beta^2}{4}+\frac{\beta^2}{4p} + \frac \beta 2 \frac{|\sigma|^2 + |\tau|^2}{N} + \eps_N''(\sigma,\tau)\right),
\end{align*}
where $\eps_N'(\sigma,\tau)$ and $\eps_N''(\sigma,\tau)$ satisfy
$$
\lim_{N\to\infty} \max_{(\sigma,\tau)\in S_N} |\eps_N'(\sigma,\tau)|
=
\lim_{N\to\infty} \max_{(\sigma,\tau)\in S_N} |\eps_N''(\sigma,\tau)|
=
0.
$$
Recalling that the function $g$ is bounded, we obtain
\begin{multline*}
\sum_{(\sigma, \tau) \in S_N} g\left(\frac{|\sigma|}{\sqrt N}\right) g\left(\frac{|\tau|}{\sqrt N}\right) \Co\left(\eee^{-\beta H(\sigma)}, \eee^{-\beta H(\tau)}\right)
\\=
o(1)\cdot
\sum_{(\sigma, \tau) \in S_N}
\exp\left(-\frac{\beta^2}{4}+\frac{\beta^2}{4p} + \frac \beta 2 \frac{|\sigma|^2 + |\tau|^2}{N}\right).
\end{multline*}
Next we claim that
\begin{equation}
\label{eq:claim_sum_sigma_tau}
\sum_{(\sigma, \tau) \in S_N}
\exp\left(-\frac{\beta^2}{4}+\frac{\beta^2}{4p} + \frac \beta 2 \frac{|\sigma|^2 + |\tau|^2}{N}\right) \leq C (\E Z_N(\beta, g))^2,
\end{equation}
for some constant $C$  (which may depend on $g$).
We have
\begin{align}
&\sum_{(\sigma, \tau) \in S_N}
\exp\left(-\frac{\beta^2}{4}+\frac{\beta^2}{4p} + \frac \beta 2 \frac{|\sigma|^2 + |\tau|^2}{N}\right)
\leq \eee^{-\frac{\beta^2}{4}}\eee^{\frac{\beta^2}{4p}}\sum_{(\sigma, \tau)\in S_N}
\exp\left( \frac \beta 2 \frac{|\sigma|^2 + |\tau|^2}{N}\right)
\nonumber
\\
& \leq c
\eee^{-\frac{\beta^2}{4}}\eee^{\frac{\beta^2}{4p}}(2^N)^2 \left( \E_{\xi} [\eee^{\frac{\beta}{2}\xi^2}]\right)^2,
\label{ineq:sum_sigma_tau}
\end{align}
for some constant $c$, where we used \eqref{eq:sum_g_exp_de_Moivre} (for  $g\equiv 1$).
By Theorem~\ref{EZNg} (b) we have
\begin{equation}\label{eq:asymp_Z_N^2}
(2^N)^2 \left( \mathbb E_{\xi} [\eee^{\frac{\beta}{2}\xi^2}]\right)^2 \sim \left(\E Z_N (\beta,1)\right)^2\eee^{-\frac{(1-p)\beta^2}{4p}} \sim C' \left(\E Z_N (\beta,g)\right)^2\eee^{-\frac{(1-p)\beta^2}{4p}}
\end{equation}
for some constant $C'=C_g'$. Inserting \eqref{eq:asymp_Z_N^2} into \eqref{ineq:sum_sigma_tau} proves the claim in \eqref{eq:claim_sum_sigma_tau}.
Hence we have
$$
\sum_{(\sigma, \tau) \in S_N} g\left(\frac{|\sigma|}{\sqrt N}\right) g\left(\frac{|\tau|}{\sqrt N}\right) \Co\left(\eee^{-\beta H(\sigma)}, \eee^{-\beta H(\tau)}\right)
=
o(1)\cdot (\E Z_N(\beta, g))^2.
$$
\vspace*{2mm}
\noindent
\textit{Step 2: Atypical pairs $(\sigma, \tau)$}. Let $V_N(k,l,m)$ be the set of pairs
$$(\sigma,\tau)\in \{-1,+1\}^N\times\{-1,+1\}^N \quad \mbox{for which }|\sigma|=k,|\tau|= l, \mbox{and }|\sigma \tau| = m.$$

Denote by $\nu_N(k,l,m) = \# V_N(k,l,m)$ the number of such pairs. If we sample $\sigma=(\sigma_1,\ldots,\sigma_N)$ and $\tau=(\tau_1,\ldots,\tau_N)$ independently and uniformly from $\{-1,+1\}^N$, then we can regard $(\sigma_i,\tau_i,\sigma_i\tau_i)$, $1\leq i\leq N$, as i.i.d.\ three-dimensional random vectors with zero mean.
The covariance matrix of these random vectors is the $3\times 3$ identity matrix because
$$
\sigma_i (\sigma_i\tau_i) = \tau_i, \quad \tau_i (\sigma_i\tau_i) = \sigma_i, \quad \sigma_i^2 = \tau_i^2 = (\sigma_i\tau_i)^2 = 1.
$$
  By the three-dimensional Local Central Limit Theorem~\cite{Davis1995}, there is a universal constant $C$ such that
$$
\nu_N(k,l,m) \leq C 2^{2N} N^{-3/2} \eee^{-\frac{k^2}{2N} - \frac{l^2}{2N} - \frac{m^2}{2N}}, \quad (k,l,m)\in\Z^3.
$$
It follows from this and~\eqref{eq:cov_computation2} that for every $(k,l,m)\in\Z^3$,
\begin{multline*}
2^{-2N}\sum_{(\sigma,\tau) \in V_N(k,l,m)}
\E [\eee^{-\beta H(\sigma)} \eee^{-\beta H(\tau)}]
\leq \\
CN^{-3/2}
\exp\left(
\left(B_N(\beta) + \frac{C_{N,3}}{N^2p^2}\right) \left(1+\frac{m^2}{N^2}\right) - \frac{m^2}{2N}
+
\frac{k^2+l^2}N \left(\frac {\beta-1} 2 + \frac{C_{N,4}}{N^2p^2} \right)
\right).
\end{multline*}
Note that $B_N(\beta) = 2A_N(\beta)+o(1) = O(1/p)$ by~\eqref{eq:A_N_beta_asympt}, \eqref{eq:B_N_beta_asympt} and hence
$$
\left(B_N(\beta) + \frac{C_{N,3}}{N^2p^2}\right) \left(1+\frac{m^2}{N^2}\right) = 2 A_N(\beta) +o(1) + \frac{m^2}{N^2} O(1/p) \leq 2 A_N(\beta) +o(1) + \frac{m^2}{4N}.
$$
Since $\beta<1$, we obtain for some $\varepsilon >0$ and $N$ sufficiently large
\begin{equation*}
2^{-2N}\sum_{(\sigma,\tau) \in V_N(k,l,m)}
\E [\eee^{-\beta H(\sigma)} \eee^{-\beta H(\tau)}]
\leq
CN^{-3/2}
\eee^{2A_N(\beta)}
\eee^{-\eps \frac{k^2+l^2 + m^2} {2N}}.
\end{equation*}

A similar argument applies to the sum of $\E [\eee^{-\beta H(\sigma)}] \E[\eee^{-\beta H(\tau)}]$, i.e.\ with \eqref{eq:cov_computation1} and again the bound on $\nu_N(k,l,m) $, we have
\begin{align*}
&2^{-2N}\sum_{(\sigma,\tau) \in V_N(k,l,m)}
\E [\eee^{-\beta H(\sigma)}] \E[ \eee^{-\beta H(\tau)}]
\\&\leq C   N^{-3/2} \eee^{-\frac{k^2}{2N} - \frac{l^2}{2N} - \frac{m^2}{2N}} \eee^{2A_N(\beta) + \frac{\beta}{2} \frac {k^2+l^2} N + \frac 1 {N^2p^2}   \left(2C_{N,1} + C_{N,2}\frac {k^2 + l^2}N\right)}
\\
&\leq CN^{-3/2}
\eee^{2A_N(\beta)}
\eee^{-\eps \frac{k^2+l^2 + m^2} {2N}}
\end{align*}
for some $\varepsilon >0$ and $N$ sufficiently large.

Thus we have
$$
2^{-2N}\sum_{(\sigma,\tau) \in V_N(k,l,m)}
\left| \Cov (\eee^{-\beta H(\sigma)}, \eee^{-\beta H(\tau)})\right|
\leq
CN^{-3/2}
\eee^{2A_N(\beta)}
\eee^{-\eps \frac{k^2+l^2 + m^2} {2N}}
$$
for some $\varepsilon >0$ and $N$ sufficiently large.
Since the function $g$ is bounded, we obtain
\begin{multline*}
\left|\sum_{(\sigma,\tau) \in S_N^c}
g\left(\frac{|\sigma|}{\sqrt N}\right) g\left(\frac{|\tau|}{\sqrt N}\right)
 \Cov (\eee^{-\beta H(\sigma)}, \eee^{-\beta H(\tau)}) \right|
\\
\leq
C \|g\|_\infty^2 2^{2N} \eee^{2A_N(\beta)}  N^{-3/2}
\sum_{\substack{(k,l,m)\in\Z^3\\N^{-1/2}(k,l,m)\in D_N}} \eee^{-\eps \frac{k^2+l^2 + m^2} {2N}},
\end{multline*}
where $D_N:=\{(x,y,z)\in\R^3\colon |x|>\sqrt {Np} \text{ or } |y|>\sqrt{Np} \text{ or } |z|>N^{1/6}\}$.
Estimating the Riemann sum on the right-hand side by the Riemann integral (over a slightly larger domain), we obtain
$$
 N^{-3/2}
\sum_{\substack{(k,l,m)\in\Z^3\\N^{-1/2}(k,l,m)\in D_N}} \eee^{-\eps \frac{k^2+l^2 + m^2} {2N}}
\leq
\int_{\frac 12 D_N} \eee^{-\eps \frac{x^2+y^2 + z^2} {2}} \dd x\, \dd y \,  \dd z  = o(1)
$$
because $\sqrt {Np} \to \infty$ as $N\to\infty$. In view of~\eqref{eq:Z_N_beta_g_proof} it follows that
$$
\sum_{(\sigma, \tau) \in S_N^c} g\left(\frac{|\sigma|}{\sqrt N}\right) g\left(\frac{|\tau|}{\sqrt N}\right) \Co\left(\eee^{-\beta H(\sigma)}, \eee^{-\beta H(\tau)}\right)
=
o(1)\cdot (\E Z_N(\beta, g))^2.
$$
Combining the results of Step 1 and Step 2 completes the proof of Theorem \ref{VarZ2}.
\end{proof}

We are now ready to prove Theorems \ref{theoCW1} and \ref{theo:magnetization}.

\begin{proof}[Proof of Theorems \ref{theoCW1} and \ref{theo:magnetization}]
Theorem \ref{VarZ2} shows that, if $N^2p^3\to \infty$, we have that $\V(Z_N(\beta,g))= o((\E Z_N(\beta, g))^2)$.
Therefore,
\begin{equation}\label{eq:in_probab}
\frac{ Z_N(\beta, g)}{\E Z_N(\beta, g)} \to 1
\end{equation}
in $L^2$, for all non-negative $g \in \mathcal{C}_b(\R)$, $g\not  \equiv 0$. By Chebyshev's inequality, this convergence holds in probability, too. This proves Theorem~\ref{theoCW1}.

Recall that $L_N$ is the random probability measure on $\R$ defined in~\eqref{eq:def_L_N}.
For all non-negative $g \in \mathcal{C}_b(\R)$  we have
$$
\int_{-\infty}^{+\infty} g(x) L_N(\dd x)
=
\E_{\mu_\beta }\left[ g\left( \frac{\sum_{i=1}^N \sigma_i}{\sqrt N} \right)\right]
=
\frac{Z_N(\beta, g)}{Z_N(\beta)}.
$$
It follows from~\eqref{eq:in_probab} and Theorem~\ref{EZNg} (b) and (c) that
$$
\lim_{N\to\infty}\int_{-\infty}^{+\infty} g(x) L_N(\dd x)
=
\lim_{N\to\infty} \frac{\E Z_N(\beta, g)}{\E Z_N(\beta)}
=
\sqrt{1-\beta}\, \E_\xi [g(\xi) \eee^{\frac \beta 2 \xi^2}]
$$
in probability, where $\xi$ is a standard normally distributed random variable.

As the right hand side equals $\int_{-\infty}^{+\infty} g(x) \phi_{0, \frac 1 {1-\beta}}(x) dx$, where
$\phi_{0, \frac 1 {1-\beta}}(x)$ is the density of a normal distribution with mean $0$ and variance $1/(1-\beta)$, we have shown that $L_N$, considered as a random element of the space of probability measures $\mathcal M(\R)$, converges in probability   to a normal distribution with mean $0$ and variance $1/(1-\beta)$, considered as a deterministic point in $\mathcal M(\R)$.
\end{proof}

\begin{remark}
So far, we do not prove that there is no Central Limit Theorem for $\sqrt N m_N$ when $p^3 N^2$ does not diverge to infinity. However, what is suggested by the above computations is that $\V Z_N(\beta,g)$ and $(\E Z_N(\beta,g))^2$ are of the same order provided that $p^3 N^2 \to c\in (0,\infty)$, hence the behaviour of $Z_N(\beta,g)$ as well as $m_N(\sigma)$ may very well change, if $p^3 N^2$ does not diverge to infinity.

Indeed, in forthcoming work we plan to prove Central Limit Theorem for $m_N(\sigma)$ as well as for $Z_N(\beta)$ for smaller values of $p$. To this end we will have to rely on techniques that are different from techniques used in this note. We hope that they will also allow us to prove a (probably non-standard) Central Limit Theorem for $m_N(\sigma)$ at the critical temperature.
\end{remark}

\bibliographystyle{abbrv}
\bibliography{LiteraturDatenbank}
\end{document}